\numberwithin{equation}{section}
\newtheorem{lemma}{Lemma}
\newtheorem{remark}{Remark}
\newtheorem{theorem}{Theorem}
\title{Exponential Consensus through Z-Control in High-Order Multi-Agent Systems}
\author{
 Angela Monti \\
  Istituto per le Applicazioni del Calcolo \lq \lq M. Picone\rq \rq \\
  National Research Council (CNR)\\
  via G. Amendola 122/D, Bari, Italy\\
  \texttt{angela.monti@cnr.it} 
  \And
  Fasma Diele \\
  Istituto per le Applicazioni del Calcolo \lq \lq M. Picone\rq \rq \\
  National Research Council (CNR)\\
  via G. Amendola 122/D, Bari, Italy\\
  \texttt{fasma.diele@cnr.it}
}
\begin{document}

\maketitle
\begin{abstract}

In this work, we introduce a Z-control strategy  for multi-agent systems of arbitrary order, aimed at driving the agents toward consensus in the highest-order observable state. The proposed framework supports both direct and indirect control schemes, making it applicable in scenarios where high-order derivatives such as acceleration cannot be directly manipulated. Theoretical analysis ensures exponential convergence while preserving the average dynamics, and a hierarchy of control laws is derived accordingly. Numerical experiments up to third-order models, including opinion dynamics and Cucker-Smale flocking systems, demonstrate the robustness and flexibility of Z-control under varying interaction regimes and control intensities.
\end{abstract}
\section{Introduction}

The study of consensus in multi-agent systems is a central topic in control theory and complex systems \cite{olfati2004consensus,ren2008distributed}, with applications ranging from robotics and biological swarms \cite{olfati2006flocking,vicsek1995novel} to social behaviour \cite{proskurnikov2018tutorial} and epidemiology \cite{nowzari2016analysis}. Classical consensus models have traditionally focused on first- or second-order dynamics \cite{he2011consensus, ren2006high}
where agents align their state or velocity through local interaction rules. However, many real-world systems, such as autonomous vehicles, collective motion in animals, or socio-dynamic phenomena, exhibit higher-order behaviors, necessitating more general and flexible control strategies.

The Z-control approach is an error-driven feedback design originally developed within neural-dynamic methods for solving dynamic and optimization problems \cite{GuoZhang2014,LiaoZhang2014,ZhangLi2009,ZhangYi2011}. In recent years it has moved beyond engineering, finding applications in the life sciences. In population dynamics it has been used on classical and generalized Lotka-Volterra systems to promote ecological coexistence \cite{ZhangEtAl2016,MBS2016}; in epidemic modeling it has been applied to simple susceptible-infected frameworks to regulate disease spread \cite{Samanta2018,SenapatiEtAl2020} and to suppress backward (subthreshold endemic) scenarios \cite{LacitignolaDiele2019}; and in eco-epidemiology it has been employed to prevent chaotic oscillations in predator-prey systems with infection in the prey \cite{AlzahraniEtAl2018,MandalEtAl2021}; and, more recently, in sustainability, it has been applied to referral marketing for the circular economy \cite{LMcircular2025}.

Within these settings, Z-control may be implemented either as \emph{direct} control, where external actions are directly  applied to  target populations, or as \emph{indirect} control, intervening on one population to steer another. A key advantage is its ability to reshape the uncontrolled dynamics so that trajectories converge to a prescribed target equilibrium, which becomes the unique attractor of the Z-controlled model \cite{MBS2016}. Effective use, however, requires careful calibration of the feedback strength to balance convergence speed with model constraints \cite{LacitignolaDiele2019}.

In this work, we propose a novel, hierarchical Z-control methodology, designed to stabilize consensus in multi-agent systems of arbitrary order $k$. Unlike classical approaches that assume only direct control over the highest-order state variable, Z-control also supports indirect control, whereby consensus is achieved at the top-level state through intervention on lower-order dynamics (e.g., velocity or position). This is particularly valuable in scenarios where direct manipulation of the  highest order observable state is not physically feasible \cite{nakai2025guidance,lozano2016phototaxis}.

Several alternative strategies have been proposed in the literature for enforcing consensus. Among the most notable are optimal control methods, such as those studied in~\cite{bailo2018optimal, monti2025hierarchicalclusteringdimensionalreduction} for the Cucker-Smale model or opinion dynamics. Their approach casts the consensus problem as a finite-horizon dynamic optimization, balancing control energy against deviation from consensus. While powerful and theoretically grounded, these methods often suffer from scalability issues typically faced numerically via iterative schemes, making real-time implementation challenging in large-scale or fast-evolving systems.

In contrast, the Z-control framework offers a fully analytical and scalable approach. By recursively constructing a chain of error dynamics, Z-control transforms the consensus goal into a sequence of stabilizing conditions imposed on lower-order state variables. This leads to a family of feedback laws that can be tailored to act either directly on the highest-order derivative (direct Z-control) or indirectly on lower order derivatives as  position and velocity-level dynamics (indirect Z-control), while preserving the system's conserved quantities induced by the structural properties of the interaction strength matrix. In this paper we focus on interaction strength matrices that are \emph{weight-balanced} (see, e.g., \cite{gharesifard2012ejc}):
this assumption is a mild relaxation of symmetry that preserves several key spectral and averaging properties of undirected Laplacians (see, e.g., \cite{chung1997spectral}).

This work extends Z-control to general $k$-th order multi-agent systems, building on earlier studies \cite{LacitignolaDiele2019,MBS2016,lacitignola2021using}. Those contributions laid the groundwork for the theoretical generalization developed here, showing that Z-control can steer such systems toward prescribed consensus via indirect, and often more practical, interventions \cite{lozano2016phototaxis,nakai2025guidance}.

To illustrate the power and flexibility of our approach, we apply Z-control to canonical multi-agent models: a first-order opinion dynamics system inspired by the Hegselmann-Krause framework, the second-order Cucker-Smale flocking model and its generalized third-order version with both direct and indirect control strategies. Through analytical derivation and numerical simulations, we show that Z-control ensures exponential convergence to consensus and allows fine-tuned trade-offs between control effort and convergence speed, controlled by the design parameter $\lambda$. We show that the method remains effective under weak or highly localized interaction regimes, where uncontrolled systems typically fail to self-organize.


The rest of the paper is organized as follows: Section~\ref{sec:model} formalizes the $k$-th order consensus dynamics. Section~\ref{sec:zcontrol} introduces Z-control and its recursive design. Section~\ref{sec:applications} presents numerical results on opinion dynamics and flocking. Section~\ref{sec:stacked-solve} presents the least-squares formulation and solver for indirect Z-control, and Section~\ref{sec:conclusion} concludes with future directions.

\section{General $k$-th Order Consensus Models}\label{sec:model}

We consider a general class of $k$-th order consensus models in which each agent
$i\in\{1,\dots,N\}$ is characterized by the sequence of state variables
\[
x_i^{(1)}(t),\, x_i^{(2)}(t),\, \dots,\, x_i^{(k)}(t) \in \mathbb{R}^d,
\]
corresponding to position, velocity, acceleration, and higher-order derivatives of the position
$x_i^{(1)}(t) := x_i(t)$.

The dynamics evolve as
\begin{equation} \label{k-order_system}
\begin{cases}
\dot{x}_i^{(1)}(t) = x_i^{(2)}(t), \\
\dot{x}_i^{(2)}(t) = x_i^{(3)}(t), \\
\ \ \vdots \\
\dot{x}_i^{(k-1)}(t) = x_i^{(k)}(t), \\
\dot{x}_i^{(k)}(t) = \displaystyle\sum_{j=1}^N a_{ij}\!\big(X^{(1)}(t)\big)\,\big(x_j^{(k)}(t) - x_i^{(k)}(t)\big),
\end{cases}
\qquad i = 1,\dots,N,
\end{equation}
where $a_{ij}\!\big(X^{(1)}(t)\big)\ge 0$ denotes the interaction strength between agents $i$ and $j$, possibly depending on the position matrix 
$X^{(1)}(t)\in\mathbb{R}^{N\times d}$ obtained by placing each $d$ dimensional vector $\big(x_i^{(1)}(t)\big)^\top$ in the $i$-th row, for $i=1,\dots N$.

In this paper, we restrict our attention to the case in which $A=(a_{ij})$ is \emph{weight-balanced} (see, e.g., \cite{gharesifard2012ejc}), namely
\[
\sum_{j=1}^N a_{ij} \;=\; \sum_{j=1}^N a_{ji} \qquad \text{for all } i.
\]
For Laplacian-type matrices, weight balance is equivalent to simultaneously having zero row sums  ($A \,  1_N \,= 0_N$) and zero column sums ($1_N^\top \, A \,  \,= 0_N$),  where $1_N$ and $0_N$ denote the $N$-dimensional column vectors of ones and zeros, respectively.

In networked systems, weight-balanced matrices play a central role: they guarantee invariance of the arithmetic average for consensus dynamics and allow many undirected results to extend to directed settings. This is exploited in continuous-time and discrete-time average consensus \cite{xiao2004fast,olfati2004consensus} and in continuous-time distributed convex optimization over directed graphs, where weight balance enables primal dynamics with convergence guarantees \cite{gharesifard2012ejc}. There is also an active literature on generating weight-balanced (or even doubly stochastic) weights in a distributed manner \cite{gharesifard2012ejc}.

\subsection{Consensus and invariance of average}

We say the $k$-th order consensus is achieved if:
\[
\lim_{t \to \infty} \left\| x_i^{(k)}(t) - \bar{x}^{(k)}(t) \right\| = 0, \quad \forall i,
\]
where \( \bar{x}^{(k)}(t) := \frac{1}{N} \sum_{i=1}^N x_i^{(k)}(t) \) is the average of the highest-order state.
\begin{lemma}[Invariance of the average under weight-balanced interactions]
Let $x_i^{(k)}(t)\in\mathbb{R}^d$ evolve according to \eqref{k-order_system}

where the interaction matrix $A=(a_{ij})$ is weight-balanced. Then the population average
\[
\bar{x}^{(k)}(t)\;:=\;\frac{1}{N}\sum_{i=1}^N x_i^{(k)}(t)
\]
is preserved.
\end{lemma}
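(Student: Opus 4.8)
The plan is to prove that the average is constant by showing $\frac{d}{dt}\bar{x}^{(k)}(t)=0$ along every solution of \eqref{k-order_system}. Since the lower-order equations in \eqref{k-order_system} are pure integrators that do not feed into the evolution of $x^{(k)}$, only the last equation is relevant to the average of the highest-order state. Differentiating the definition of $\bar{x}^{(k)}$ and substituting the right-hand side gives
\[
\frac{d}{dt}\bar{x}^{(k)}(t)=\frac{1}{N}\sum_{i=1}^N\dot{x}_i^{(k)}(t)=\frac{1}{N}\sum_{i=1}^N\sum_{j=1}^N a_{ij}\big(x_j^{(k)}-x_i^{(k)}\big),
\]
so the task reduces to showing that this double sum vanishes identically in the states.

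I would then split the sum as $\sum_{i,j}a_{ij}x_j^{(k)}-\sum_{i,j}a_{ij}x_i^{(k)}$ and collect the coefficient multiplying each state vector. In the first sum the coefficient of $x_j^{(k)}$ is the $j$-th column sum $\sum_i a_{ij}$, while in the second the coefficient of $x_i^{(k)}$ is the $i$-th row sum $\sum_j a_{ij}$. After relabeling the dummy index, the two expressions coincide precisely when each column sum equals the matching row sum, which is exactly the weight-balance hypothesis $\sum_j a_{ij}=\sum_j a_{ji}$; hence the difference is zero. Equivalently, writing the top-level dynamics in matrix form as $\dot{X}^{(k)}=-L\,X^{(k)}$ with $L=D-A$, $D=\operatorname{diag}(\sum_j a_{ij})$, and $\bar{x}^{(k)}=\tfrac{1}{N}(X^{(k)})^\top 1_N$, weight balance is exactly the statement $1_N^\top L=0_N^\top$, so $1_N$ is a left null vector of $L$ and $\tfrac{d}{dt}\bar{x}^{(k)}=-\tfrac{1}{N}(X^{(k)})^\top L^\top 1_N=0$.

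The one point that deserves care is that $A$ is not assumed symmetric, so the pairwise terms $a_{ij}(x_j^{(k)}-x_i^{(k)})$ and $a_{ji}(x_i^{(k)}-x_j^{(k)})$ do not cancel term by term, as they would in the undirected case. The cancellation is therefore global rather than local, and it is precisely the equality of each row sum with its corresponding column sum that produces it. I expect this to be the only subtlety; once the double sum is reorganized by row and column sums, invoking the hypothesis is immediate and the remainder is routine.
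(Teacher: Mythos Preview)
Your proposal is correct and follows essentially the same route as the paper: differentiate $\bar{x}^{(k)}$, insert the last equation of \eqref{k-order_system}, split the double sum into column-sum and row-sum contributions, and invoke weight balance to cancel them. Your additional matrix-form remark with $1_N^\top L=0_N^\top$ and your note that the cancellation is global rather than pairwise are helpful glosses but do not change the argument.
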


\begin{proof} 

By definition, 
\begin{align*}
\dot{\bar{x}}^{(k)}(t)
&= \frac{1}{N}\sum_{i=1}^{N}\dot{x}_i^{(k)}(t)
 = \frac{1}{N}\sum_{i,j=1}^{N} a_{ij}\bigl(x_j^{(k)}(t)-x_i^{(k)}(t)\bigr)\\
&= \frac{1}{N}\sum_{i,j=1}^{N} a_{ij}\,x_j^{(k)}(t)
 - \frac{1}{N}\sum_{i,j=1}^{N} a_{ij}\,x_i^{(k)}(t)
 = \frac{1}{N}\sum_{j=1}^{N} x_j^{(k)}(t)\sum_{i=1}^{N} a_{ij}
 - \frac{1}{N}\sum_{i=1}^{N} x_i^{(k)}(t)\sum_{j=1}^{N} a_{ij}\\
&= \frac{1}{N}\sum_{j=1}^{N} x_j^{(k)}(t)\sum_{i=1}^{N} a_{ij}
 - \frac{1}{N}\sum_{i=1}^{N} x_i^{(k)}(t)\sum_{j=1}^{N} a_{ji}
 = \frac{1}{N}\sum_{i,j=1}^{N} a_{ij}\,x_j^{(k)}(t)
 - \frac{1}{N}\sum_{i,j=1}^{N} a_{ji}\,x_i^{(k)}(t)\\
&= \frac{1}{N}\sum_{i,j=1}^{N} a_{ij}\,x_j^{(k)}(t)
 - \frac{1}{N}\sum_{i,j=1}^{N} a_{ij}\,x_j^{(k)}(t)
 = 0_d.
\end{align*}

\end{proof}
To quantify the deviation from consensus, we introduce the consensus parameter

\begin{equation}
    \label{eq:consensus}
    \Gamma(t) = \frac{1}{N^2}\sum_{i=1}^{N}
\|x_i^{(k)}(t) - \bar{x}^{(k)}(t)\|^2,
\end{equation}
so that a solution to \eqref{k-order_system} tends to a consensus configuration if and only if $\Gamma(t) \to 0$ as $t \to \infty$.

\section{Z-Controlled \(k\)-th Order Multi-Agent Models}\label{sec:zcontrol}

We consider the general system of order \(k\) as defined in (\ref{k-order_system}). The objective is to drive the system to consensus in the highest observable state, that is,
\[
\lim_{t \to \infty} \|x_i^{(k)}(t) - \bar{x}^{(k)}(t)\| = 0, \quad \text{with } \bar{x}^{(k)}(t) := \frac{1}{N} \sum_{i=1}^N x_i^{(k)}(t),
\]
by designing the control input \( u_i(t) \) such that the error
\begin{equation}
    \label{errore}
e_i^{[1]}(t) := x_i^{(k)}(t) - \bar{x}^{(k)}(0)
\end{equation}
decays exponentially with rate \(\lambda\), while preserving the average $\bar{x}^{(k)}$ i.e.  \( \bar{x}^{(k)}(t)= \bar{x}^{(k)}(0)\).

\subsection{Direct and Indirect Z-Control}

There are two main paradigms for designing the control \( u_i(t) \):
\paragraph{Direct Z-Control.}
The control is applied directly to the dynamics of \(x_i^{(k)}\). The system is

\[
\dot{x}_i^{(1)}(t) = x_i^{(2)}(t),\quad
\dot{x}_i^{(2)}(t) = x_i^{(3)}(t),\ \dots,\ 
\dot{x}_i^{(k-1)}(t) = x_i^{(k)}(t),
\]
\begin{equation}\label{eq:ultima}
\dot{x}_i^{(k)}(t) = \sum_{j=1}^N a_{ij}\!\big(X^{(1)}(t)\big)\,\big(x_j^{(k)}(t) - x_i^{(k)}(t)\big) + u_i(t),
\end{equation}
and the control is designed by stabilizing the error between \(x_i^{(k)}\)(t) and its average \( \bar{x}^{(k)}(t)\).

Before applying the Z-control technique, we state a preliminary result for any control acting directly on \({x}_i^{(k)}\).

\begin{theorem}\label{the:sumzerocontrol}
If \(A\) is weight-balanced, then controls applied to the dynamics of \(x_i^{(k)}\) preserve the average \(\bar{x}^{(k)}(t)\) (as in the uncontrolled case)  if and only if the inputs are zero-sum at all times, i.e., \(\sum_{i=1}^N u_i(t)=0_d\) for all \(t\).
\end{theorem}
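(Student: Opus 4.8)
The plan is to reduce the statement to a single vector identity for the time derivative of the average and then read off both implications directly. First I would differentiate $\bar{x}^{(k)}(t)=\frac1N\sum_{i=1}^N x_i^{(k)}(t)$ along the controlled dynamics \eqref{eq:ultima}, obtaining
\[
\dot{\bar{x}}^{(k)}(t)=\frac1N\sum_{i=1}^N\sum_{j=1}^N a_{ij}\bigl(x_j^{(k)}(t)-x_i^{(k)}(t)\bigr)+\frac1N\sum_{i=1}^N u_i(t).
\]
The only structural input beyond linearity is the weight-balance of $A$.

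Next I would invoke the preceding invariance lemma (or, equivalently, repeat its weight-balance bookkeeping verbatim): since $A$ is weight-balanced, the double sum involving the $a_{ij}$ cancels identically, so the interaction term contributes nothing to the evolution of the average. This collapses the expression to
\[
\dot{\bar{x}}^{(k)}(t)=\frac1N\sum_{i=1}^N u_i(t),
\]
which isolates the mean of the control inputs as the sole driver of $\bar{x}^{(k)}$, exactly as in the uncontrolled case but with the added forcing $\frac1N\sum_i u_i$.

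With this identity in hand both directions are immediate. For sufficiency, if $\sum_i u_i(t)=0_d$ for all $t$ then $\dot{\bar{x}}^{(k)}(t)=0_d$ for all $t$, so $\bar{x}^{(k)}(t)\equiv\bar{x}^{(k)}(0)$ and the average is preserved. For necessity, preservation of the average means $\bar{x}^{(k)}(t)\equiv\bar{x}^{(k)}(0)$; differentiating in $t$ forces $\dot{\bar{x}}^{(k)}(t)=0_d$ at every instant, whence $\sum_i u_i(t)=0_d$ for all $t$.

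The computation is essentially one line once the interaction term is eliminated, so I do not anticipate a genuine obstacle; the only point requiring care is the quantifier \emph{at all times}. Because preservation is demanded pointwise for every $t$ (not merely asymptotically, nor in an integrated sense), the necessity direction yields the pointwise condition $\sum_i u_i(t)=0_d$ rather than a weaker statement such as $\int_0^T\sum_i u_i\,dt=0_d$. I would therefore make explicit the mild regularity assumption — continuity of $t\mapsto u_i(t)$ — under which ``$\bar{x}^{(k)}$ constant on an interval'' is equivalent to ``$\dot{\bar{x}}^{(k)}$ vanishes identically there,'' which closes the equivalence cleanly.
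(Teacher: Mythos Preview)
Your proposal is correct and follows essentially the same route as the paper: differentiate the average along \eqref{eq:ultima}, use weight-balance to kill the interaction double sum, and read off the equivalence from $\dot{\bar{x}}^{(k)}(t)=\frac1N\sum_i u_i(t)$. Your added remark on the continuity of $u_i$ for the necessity direction is a minor refinement the paper leaves implicit.
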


\begin{proof}
Let us consider the Z-controlled dynamics \eqref{eq:ultima} and evaluate the time derivative of the average
\begin{align*}
\dot{\bar{x}}^{(k)}(t) &= \frac{1}{N} \sum_{i=1}^{N}\dot{x}_i^{(k)}(t) = \frac{1}{N}\sum_{i=1}^{N} \Bigg( \sum_{j=1}^{N} a_{ij}\left(x_j^{(k)}(t)-x_i^{(k)}(t)\right)+u_i(t)\Bigg) \\
&= \frac{1}{N}\sum_{i,j=1}^{N} a_{ij}\left(x_j^{(k)}(t)-x_i^{(k)}(t)\right) + \frac{1}{N} \sum_{i=1}^{N} u_i(t)\, = \frac{1}{N}\, \sum_{i=1}^{N} u_i(t), 
\end{align*}
where $\sum_{i,j=1}^{N} a_{ij}\left(x_j^{(k)}(t)-x_i^{(k)}(t)\right) = 0_d$ follows from the weight-balance property of $A$. As a consequence, 
\begin{equation*}
 \dot{\bar{x}}^{(k)}(t) = 0_d \Longleftrightarrow \sum_{i=1}^{N} u_i(t) = 0_d \quad \text{for all } t,  
\end{equation*}
thus the average $\bar{x}^{(k)}(t)$ is preserved if and only if the inputs $u_i(t)$ have zero sum at all times. 
\end{proof}

Let us now focus on the Z-control applied to \(x_i^{(k)}\).
\begin{theorem}
    Suppose $A$ weight-balanced and set $\lambda >0$. Consider the control law 
\begin{equation}\label{eq:direct-control}
u_i(t)
= -\lambda\big(x_i^{(k)}(t)-\bar{x}^{(k)}(t)\big)
 - \sum_{j=1}^N a_{ij}\!\big(X^{(1)}(t)\big)\,\big(x_j^{(k)}(t)-x_i^{(k)}(t)\big), \qquad  i=1,\dots, N. 
\end{equation} The solution of the controlled dynamics 
(\ref{eq:ultima}) with controls defined in (\ref{eq:direct-control}) preserves the average  \(\bar{x}^{(k)}(t)=\bar{x}^{(k)}(0)\). Moreover, the solution  of the controlled dynamics 
(\ref{eq:ultima}) with controls defined in (\ref{eq:direct-control}) satisfies the Z-control design equation 
\begin{equation*}
\label{eq:first-order}
\dot e_i^{[1]} = -\lambda\, e_i^{[1]}.
\end{equation*}
i.e. the error 
$e_i^{[1]}(t)$, defined in \eqref{errore}, 
decays exponentially with rate \(\lambda\).
\end{theorem}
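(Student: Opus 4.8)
The plan is to establish the two assertions in sequence, since the exponential-decay claim depends on the average-invariance claim having been secured first. The entire argument rests on one structural observation: the control \eqref{eq:direct-control} is assembled by subtracting off the interaction term verbatim and inserting a linear restoring term $-\lambda\big(x_i^{(k)}-\bar{x}^{(k)}\big)$ in its place, so that substituting it into the closed loop should produce an exact cancellation.

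First I would prove average preservation by invoking Theorem~\ref{the:sumzerocontrol}: since $A$ is weight-balanced, it suffices to check that the chosen inputs are zero-sum, i.e. $\sum_{i=1}^N u_i(t)=0_d$ for all $t$. Summing \eqref{eq:direct-control} over $i$ splits into two pieces. The interaction piece $\sum_{i,j} a_{ij}\big(x_j^{(k)}-x_i^{(k)}\big)$ vanishes by the weight-balance identity already exploited in the invariance lemma. The linear piece reduces to $-\lambda\sum_{i}\big(x_i^{(k)}-\bar{x}^{(k)}\big)=-\lambda\bigl(\sum_i x_i^{(k)}-N\bar{x}^{(k)}\bigr)=0_d$ by the definition of the average. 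Hence the zero-sum hypothesis of Theorem~\ref{the:sumzerocontrol} is met and $\bar{x}^{(k)}(t)=\bar{x}^{(k)}(0)$.

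Next I would derive the error equation. Substituting \eqref{eq:direct-control} into the highest-order equation \eqref{eq:ultima}, the two occurrences of $\sum_j a_{ij}\big(x_j^{(k)}-x_i^{(k)}\big)$ cancel identically — pointwise in $t$, and irrespective of the state-dependence of $a_{ij}$ — leaving the decoupled linear ODE $\dot{x}_i^{(k)}=-\lambda\big(x_i^{(k)}-\bar{x}^{(k)}(t)\big)$. Since $\bar{x}^{(k)}(0)$ is constant, differentiating $e_i^{[1]}=x_i^{(k)}-\bar{x}^{(k)}(0)$ gives $\dot{e}_i^{[1]}=\dot{x}_i^{(k)}=-\lambda\big(x_i^{(k)}-\bar{x}^{(k)}(t)\big)$; invoking the invariance just proved to replace $\bar{x}^{(k)}(t)$ by $\bar{x}^{(k)}(0)$ then identifies the right-hand side as $-\lambda e_i^{[1]}$, whose solution is $e_i^{[1]}(t)=e^{-\lambda t}e_i^{[1]}(0)$.

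I do not expect a genuine obstacle here: the result is an exact cancellation followed by a scalar linear ODE. The one point that warrants care — and the reason the two claims must be ordered as above — is the mismatch between the time-varying average $\bar{x}^{(k)}(t)$ appearing in the feedback law and the fixed reference $\bar{x}^{(k)}(0)$ used to define $e_i^{[1]}$. These coincide only after invariance has been established, so the exponential-decay conclusion is not self-contained and genuinely relies on the first part of the statement.
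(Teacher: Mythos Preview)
Your proposal is correct and follows essentially the same route as the paper: first verify the zero-sum property of the controls (splitting into the linear piece and the interaction piece, each vanishing by the definition of the average and weight-balance, respectively) and invoke Theorem~\ref{the:sumzerocontrol}; then substitute \eqref{eq:direct-control} into \eqref{eq:ultima}, cancel the interaction terms, and use the just-established invariance to replace $\bar{x}^{(k)}(t)$ by $\bar{x}^{(k)}(0)$ in the error derivative. Your explicit remark about the necessary ordering of the two claims is exactly the logical dependency the paper's proof relies on.
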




\begin{proof}
 Let us consider the controls defined by \eqref{eq:direct-control}. We have
\begin{align*}
    \sum_{i=1}^{N} u_i(t) &= \sum_{i=1}^{N} \Bigg(-\lambda\left(x_i^{(k)}(t)-\bar{x}^{(k)}(t)\right)-\sum_{j=1}^{N} a_{ij}(X^{(1)})\left(x_j^{(k)}(t) - x_i^{(k)}(t)\right)\Bigg)\\
    &= -\lambda \sum_{i=1}^{N} x_i^{(k)}(t)+ \lambda \sum_{i=1}^{N}\bar{x}^{(k)}(t)-\sum_{i,j=1}^{N} a_{ij}(X^{(1)})\left(x_j^{(k)}(t) - x_i^{(k)}(t)\right)\\
    &= -\lambda\,N\,\bar{x}^{(k)}(t) +\lambda\,N\,\bar{x}^{(k)}(t)= 0_d,
\end{align*}
because the double sum vanishes on weight-balanced matrices. 
From Theorem \ref{the:sumzerocontrol} the result follows.
Moreover, using the conservation property of the average of $x_i^{(k)}$, evaluate
\begin{align*}
\dot e_i^{[1]}(t)
&= \frac{d}{dt}\big(x_i^{(k)}(t)-\bar{x}^{(k)}(0)\big)
 = \dot x_i^{(k)}(t)
\, =\,  \sum_{j=1}^N a_{ij}\!\big(X^{(1)}(t)\big)\big(x_j^{(k)}(t)-x_i^{(k)}(t)\big) + u_i(t)\\
&= \sum_{j=1}^N a_{ij}\!\big(X^{(1)}(t)\big)\big(x_j^{(k)}(t)-x_i^{(k)}(t)\big)
   - \lambda\big(x_i^{(k)}(t)-\bar{x}^{(k)}(t)\big)
   - \sum_{j=1}^N a_{ij}\!\big(X^{(1)}(t)\big)\big(x_j^{(k)}(t)-x_i^{(k)}(t)\big)\\
&= -\lambda\big(x_i^{(k)}(t)-\bar{x}^{(k)}(t)\big)
 = -\lambda\big(x_i^{(k)}(t)-\bar{x}^{(k)}(0)\big)
 = -\lambda\,e_i^{[1]}(t).
\end{align*}
\end{proof}

\paragraph{Indirect Z-control.}
Indirect Z-control is particularly valuable when direct actuation of higher-order states (e.g., accelerations or jerks) is not feasible. The idea is to apply the control input on a lower-order state \(x_i^{(r)}\), with \(r\in\{1,\dots,k-1\}\), while still driving consensus on the highest-order state \(x_i^{(k)}\). The dynamics are

\begin{subequations}
\label{eq:indirect-kr}
\begin{empheq}[left=\left\{\;,right=\right.]{align}
\dot{x}_i^{(1)}(t) &= x_i^{(2)}(t), 
\notag \\
& \vdots \nonumber \\
\dot{x}_i^{(r)}(t) &= x_i^{(r+1)}(t) + u_i(t), \label{eq:indirect-r} \\
& \vdots \nonumber \\
\dot{x}_i^{(k)}(t) &= \sum_{j=1}^N a_{ij}\!\big(X^{(1)}(t)\big)\,\big(x_j^{(k)}(t)-x_i^{(k)}(t)\big). \label{eq:indirect-k}
\end{empheq} 
\end{subequations}

\begin{remark}[Conservation of the average of \(x^{(k)}\)]
Since the control \(u_i(t)\) enters only in \eqref{eq:indirect-r} with \(r<k\), it does \emph{not} appear in \eqref{eq:indirect-k}. Hence, due to the weight-balance property of $A$, the average $\bar{x}^{(k)}(t)$
is conserved as in the uncontrolled case.
\end{remark}

To achieve exponential consensus of \( x_i^{(k)} \), we define a recursive sequence of error terms:
\[
e_i^{[1]}(t) := x_i^{(k)}(t) - \bar{x}^{(k)}(t), \qquad 
e_i^{[j]}(t) := \frac{d}{dt} e_i^{[j-1]}(t) + \lambda \, e_i^{[j-1]}(t), \quad j = 2, \dots, r,
\]
and design the control to enforce:
\[
e_i^{[r]}(t) = 0_d.
\]

As established in Proposition~3.1 of~\cite{lacitignola2021using}, this condition is equivalent to requiring that the first-order error \( e_i^{[1]} \) satisfies a linear differential equation of order \( r \):
\[
\sum_{j=0}^{r} \binom{r}{j} \lambda^j \frac{d^{r-j}}{dt^{r-j}} e_i^{[1]}(t) = 0_d.
\]

The resulting control input \( u_i(t) \), acting through the dynamics of \( x_i^{(r)} \), implements a feedback law of differential order \( r+1 \), while ensuring exponential convergence of the highest-order state \( x_i^{(k)} \) to consensus.

This framework naturally gives rise to a hierarchy of control strategies: \begin{itemize} \item Indirect Z-control on \( x_i^{(k)} \) via \( x_i^{(1)} \), using an order-\( 2 \) error formula; \item Indirect Z-control on \( x_i^{(k)} \) via \( x_i^{(2)} \), using an order-\( 3 \) error formula; \item \dots \item Indirect Z-control on \( x_i^{(k)} \) via \( x_i^{(k-1)} \), using an order-\( k \) error formula. \end{itemize}

\section{Applications to Opinion Dynamics and Flocking Models}\label{sec:applications}
\subsection{First-Order Opinion Dynamics Model}

To illustrate the general framework for implementing Z-control, we consider the concrete example of first-order ABMs describing opinion dynamics \cite{motsch2014heterophilious}.
In these models, $N$ agents interact
with each other according to the first-order system
\begin{equation}
\label{abm}
\dot{x}_i(t) = \sum_{j=1}^N a_{ij}(x(t))\bigl(x_j(t) - x_i(t)\bigr),
\qquad i=1,\dots,N,
\end{equation}
where the interaction coefficients $a_{ij}(x)$ depend on the current
configuration $x=(x_1,\dots,x_N)$.

We start from a symmetric  interaction kernel $\phi:[0,\infty)\to(0,1]$. Here, the function $\phi$ models the strength of interactions between agents, 
each having a vector of opinions represented by the state 
$\mathbf{x}_i \in \mathbb{R}^d$. 
The interaction depends only on the relative distance between agents' opinions,
measured using the Euclidean norm. Set
\begin{equation*}
\phi_{ij}(x) :=
\phi(\|x_i-x_j\|).
\end{equation*}
To introduce a directed,  weight-balanced, interaction pattern, we fix
a constant $\beta\in(0,1)$ and define
\begin{equation}
\label{eq:epsilon1st}
\varepsilon(x) := \beta \min_{i=2,\dots,N} \left[ \phi_{i,i-1}(x), \phi_{1,N}(x)\right]. 
\end{equation}
We also consider the constant matrix $S=(s_{ij})_{i,j=1}^N$ given by
\begin{equation*}
s_{1,N}:=\,-1, \quad s_{N,1}:=\,1, \quad   
s_{ij} :=
\begin{cases}
+1, & j = i+1, \quad i<N,\\[2pt]
-1, & j = i-1, \quad i>1,\\[2pt]
0,  & \text{otherwise},
\end{cases}
\end{equation*}
which is skew-symmetric ($S^\top=-S$) and satisfies
$\sum_{j=1}^N s_{ij}=0=\sum_{j=1}^N s_{ji}$ for all $i$.

The interaction weights are then defined by
\[
a_{ij}(x) :=
\phi_{ij}(x) + \varepsilon(x)\,s_{ij}.
\]
By construction $a_{ij}(x)\ge 0$ for all $i\neq j$, since only the entries
corresponding to edges of the cycle $(i-1,i,i+1)$ are modified and
$\varepsilon(x)\le \phi_{i,i-1}(x)$.
Moreover, for every $i$ we have
\begin{equation*}
\sum_{j=1}^N a_{ij}(x)
= \sum_{j=1}^N \phi_{ij}(x) + \varepsilon(x)\sum_{j=1}^N s_{ij}
= \sum_{j=1}^N \phi_{ij} = \sum_{j=1}^N \phi_{ji}(x) + \varepsilon(x)\sum_{j=1}^N s_{ji}(x) = \sum_{j=1}^N a_{ji}(x)
\end{equation*}

for all $i$, where we used the symmetry $\phi_{ij}=\phi_{ji}$ and the zero row/column sums
of $S$. Hence
the adjacency matrix $A(x)=(a_{ij}(x))$ is weight-balanced.
In general, for $\varepsilon(x)>0$ the matrix $A(x)$ is not symmetric, since
$a_{i,i+1}(x) - a_{i+1,i}(x) = 2\varepsilon(x)\neq 0$, and thus it defines a
directed, weight-balanced interaction structure.

\noindent As the model is of first order, only direct Z-control can be applied. 
From (\ref{eq:direct-control}), controls that guarantee asymptotic convergence toward consensus are given by 
$$
u_i= -\lambda (x_i\,-\, \bar x) - \sum_{j=1}^{N} \left( \phi_{ij}(x) + \varepsilon(x)\,k_{ij} \right)\,(x_j-x_i).
$$
\paragraph{Numerical example.}To demonstrate the effectiveness of the Z-control framework, we use the  smoothed generalized Hegselmann-Krause influence functions on infinite support:
\[
\phi(x) := \frac{1 - \text{sig}(\alpha(x - 1))}{1 - \text{sig}(-\alpha)}, \quad \text{sig}(y) = \frac{1}{1 + e^{-y}}
\]
where \( \alpha \) is a parameter to control the smoothness of the curve. 
 We consider a system of $N = 10$ agents evolving in $\mathbb{R}^2$ according to the interaction dynamics \eqref{abm}, with $\beta = 0.8$ in \eqref{eq:epsilon1st}. We simulate the uncontrolled system using the smoothed interaction function $\phi(x)$ for the four representative values $\alpha = 0.1$, $\alpha = 1.6$, $\alpha = 5$, and $\alpha = 300$. Figure~\ref{fig:alpha} (on the left) shows the evolution of the agents' opinions under the smoothed generalized Hegselmann-Krause dynamics for different values of the parameter $\alpha$. For small values of $\alpha$ (e.g., $\alpha = 0.1$), the interaction kernel decays slowly, so agents influence each other over long distances. This leads to an almost global coupling and rapid convergence to consensus. As $\alpha$ increases (e.g., $\alpha = 1.6$), the interaction becomes more localized: opinions still tend toward a common value, but the process is slower and less uniform. For intermediate values such as $\alpha = 5$, the influence range is further reduced, and multiple opinion clusters emerge; some of these may merge over time, producing a few dominant groups rather than full consensus. Finally, for very large $\alpha$ (e.g., $\alpha = 300$), interactions are restricted to very close neighbors, and the initial fragmentation persists indefinitely, resulting in stable, distinct clusters.

\begin{figure}[htbp]
    \centering
    \includegraphics[width=0.33\textwidth]
    {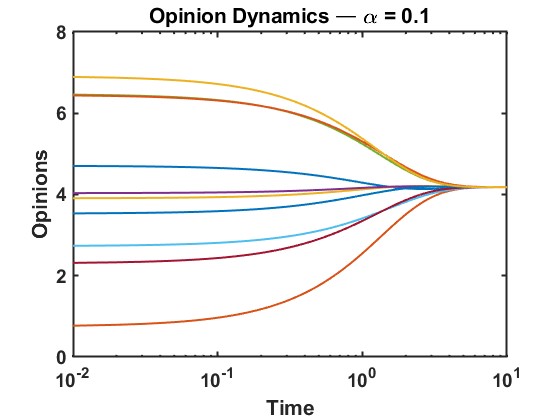} 
\includegraphics[width=0.33\textwidth]{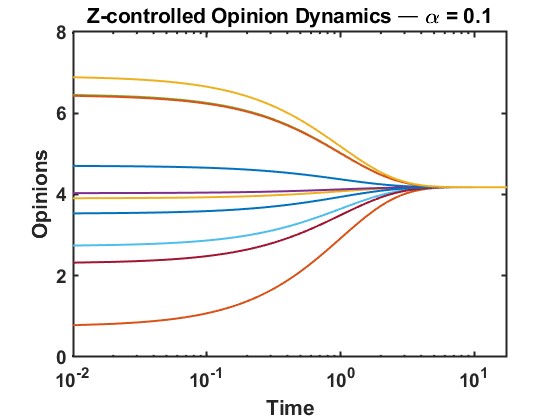}
\includegraphics[width=0.33\textwidth]{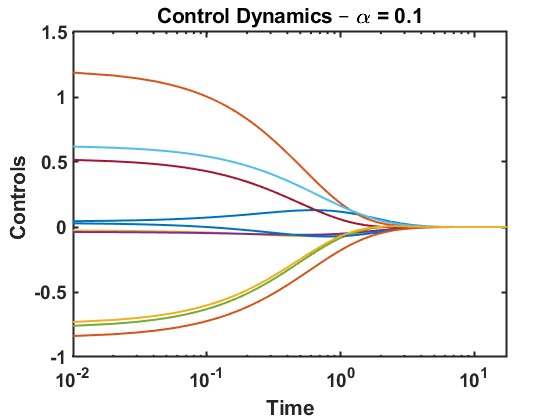}
\includegraphics[width=0.33\textwidth]   {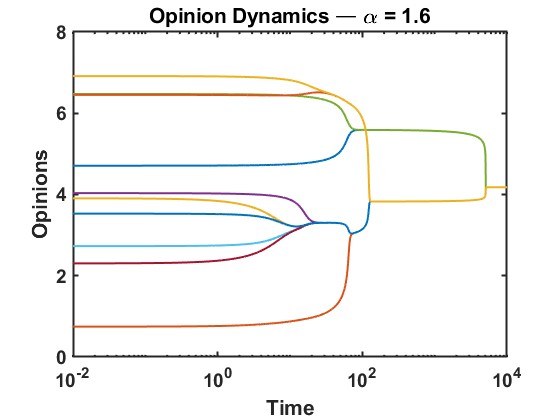}
\includegraphics[width=0.33\textwidth]{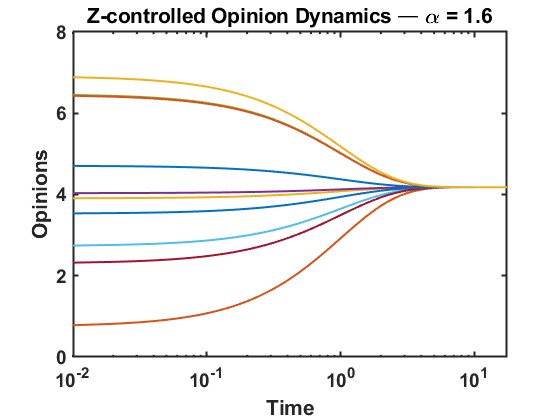}
\includegraphics[width=0.33\textwidth]{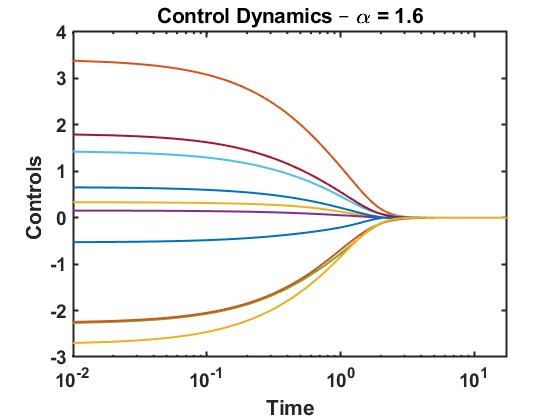}
\includegraphics[width=0.33\textwidth]{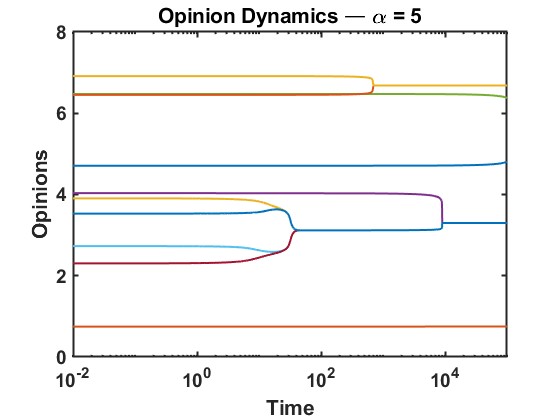}
\includegraphics[width=0.33\textwidth]{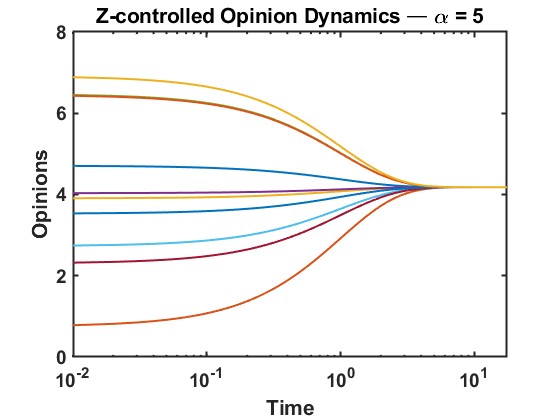}
\includegraphics[width=0.33\textwidth]{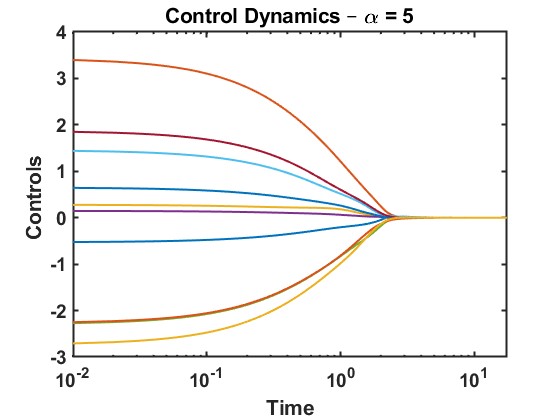}
\includegraphics[width=0.33\textwidth]{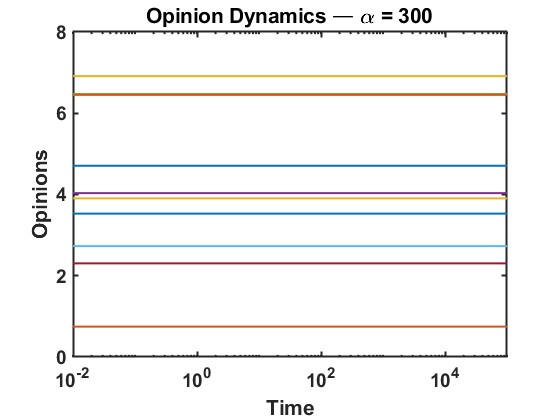} \includegraphics[width=0.33\textwidth]{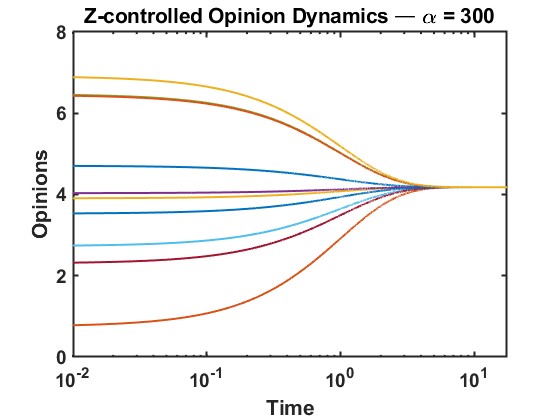}
\includegraphics[width=0.33\textwidth]{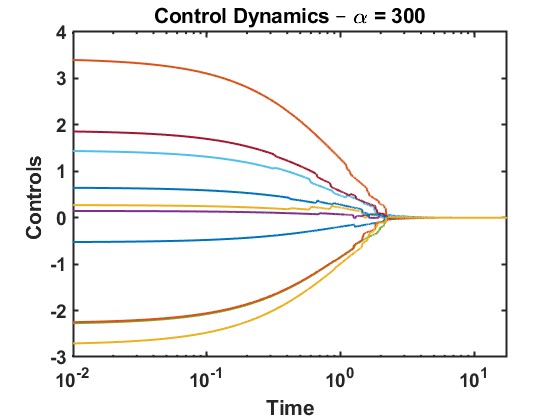}
    \caption{Opinion dynamics for four different values of the smoothness parameter $\alpha$. Left panels: uncontrolled dynamics. As $\alpha$ increases, the interaction function becomes more localized, transitioning from global consensus to the persistence of multiple opinion clusters. Center panels: Z-controlled dynamics converging to consensus. The controlled dynamics are similar for different values of $\alpha$: they converge to consensus with an error that decreases at the same rate, governed by $\lambda = 1$. Right panels: control signals required to steer the system toward consensus. For $\alpha = 0.1$  the controls remain around $\pm0.2$, while for $\alpha = 300$, significantly larger control values are needed to force the dynamics towards consensus.}
    \label{fig:alpha}
\end{figure}

To assess the effectiveness of the proposed strategy in enforcing consensus, we fix $\lambda=1$ and apply Z-control across different interaction regimes. The center panel of Figure~\ref{fig:alpha} shows the trajectories of the controlled opinions, while the right panel 
reports the time evolution of the control inputs 
. As expected, the controlled dynamics are insensitive to $\alpha$, since the convergence rate is determined solely by $\lambda$. However, the control effort increases markedly for larger values of $\alpha$, reflecting the greater input required when interactions are weaker.
To explore how we can control the velocity of the proposed Z-control strategy for consensus, we apply it under different values of the $\lambda$ parameter. In Figure \ref{fig:zcontrol_consensus}, we report the evolution of the consensus parameter $\Gamma(t)$, defined in \eqref{eq:consensus}, for different values of $\lambda$ under Z-control. All trajectories exhibit an exponential decay, confirming that consensus is achieved in every case. However, the convergence rate strongly depends on $\lambda$: larger values of $\lambda$ lead to a significantly faster decrease of $\Gamma(t)$, while smaller values result in a much slower approach to consensus. 

\begin{figure}[htbp]
    \centering
    \includegraphics[width=0.4\linewidth]{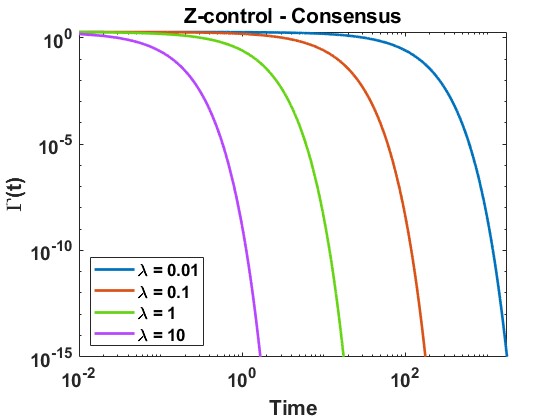}
    \caption{Consensus parameter of the controlled opinion dynamics for $\alpha = 1.6$ and increasing values of the parameter $\lambda$. Larger values of $\lambda$ induce a faster convergence. Note that achieving consensus requires setting the product $\lambda \, T$ to approximately $10$.}
    \label{fig:zcontrol_consensus}
\end{figure}



\subsection{Second-Order Cucker-Smale Flocking Model}
In this Section, we extend the Z-control strategy to a second-order multi-agent system governed by the Cucker-Smale model, a framework widely used to describe flocking dynamics. Unlike first-order models, where agents update their state directly based on interaction rules, second-order models incorporate both position and velocity, providing a more realistic representation of collective motion in natural and artificial systems.

The model is described by the following system of equations:
\begin{equation}\label{eq:CS}
    \begin{cases}
    \dot{x}_i = v_i, \\[6pt]
    \dot{v}_i = \sum_{j=1}^{N} a_{ij}(x)(v_j - v_i), 
    \end{cases}
    \quad a_{ij}(x) = \frac{K}{N \, (1 + \|x_i - x_j\|^2)^\beta}, \quad i,j  = 1, \dots, N, i \neq j
\end{equation}
where \( K > 0 \) denotes the interaction strength and \( \beta \geq 0 \) determines the rate at which interaction decays with distance.

It is well established that for \( 0 < \beta \leq \frac{1}{2} \), the system exhibits unconditional convergence to velocity consensus \cite{CuckerSmale2007,CarrilloEtAl2010b}. For \( \beta > \frac{1}{2} \), consensus emergence becomes conditional and depends on the initial configuration of the agents \cite{HaLiu2009,HaHaKim2010}.

To show the effect of the parameter $\beta$ on the uncontrolled model, we set $N = 10$ agents that evolve in $\mathbb{R}^2$ according to the interaction dynamics \eqref{eq:CS}, and we simulate the system using $K=1$ and the representative values $\beta = 0.1$ and $\beta = 1$. Figure~\ref{fig:beta01} shows the evolution of the agents' trajectories (left) and velocities (right) for $\beta = 0.1$. The velocity profiles, reported in the right panel, exhibit convergence toward a common value, indicating that consensus is naturally achieved without control. This behavior is consistent with the theoretical prediction that for $\beta \leq 1/2$ the interaction strength remains sufficiently high to synchronize the agents.
In contrast, Figure~\ref{fig:beta1} shows that, for $\beta = 1$, consensus does not naturally occur in the uncontrolled case: agent trajectories (first row, left panel) diverge and velocities (second row, left panel) remain separated over time. 

\subsubsection{Direct control on velocities}
\noindent We investigate how the Z-control mechanism affects the emergence of consensus in velocity and analyze the influence of various control parameters on the system’s convergence behavior.
 Our objective is to design a Z-controlled model
\[
\begin{cases}
\dot{x}_i = v_i, \\[6pt]
\dot{v}_i = \sum_{j=1}^{N} a_{ij}(x)(v_j - v_i) + u_i, 
\end{cases}
\quad i = 1, \dots, N.
\]
by defining controls $u_i(t)$ so that 
the mean of velocities $\bar v(t) = \bar v(0)= \bar v$ and,
$v_i(t)\to \bar v$ exponentially with  rate $\lambda$.

The direct Z-control requires 
$$
u_i= -\lambda (v_i\,-\, \bar v) - \sum_{j=1}^{N}a_{ij}(x)(v_j - v_i). \,
$$
Figure~\ref{fig:beta1} illustrates the impact of the direct Z-control on velocities for $K=1$ and $\beta=1$. The panels show the evolution of positions, velocities, control inputs, and the consensus parameter $\Gamma(t)$.
In the uncontrolled case, agent trajectories (left column, top panel) diverge significantly and velocity differences (left column, middle panel) persist over time, indicating the absence of natural consensus. Conversely, the application of Z-control (right column) aligns the velocities (middle panel) and stabilizes the positions, as confirmed by the rapid decay of $\Gamma(t)$ in the bottom-right panel. The control inputs (bottom-left panel) remain bounded and are primarily active during the initial transient phase, demonstrating the efficiency of the strategy in enforcing consensus even in regimes where $\beta \geq \frac{1}{2}$ would otherwise prevent alignment.

\begin{figure}[htbp]
    \centering
    \includegraphics[width=0.4\textwidth]{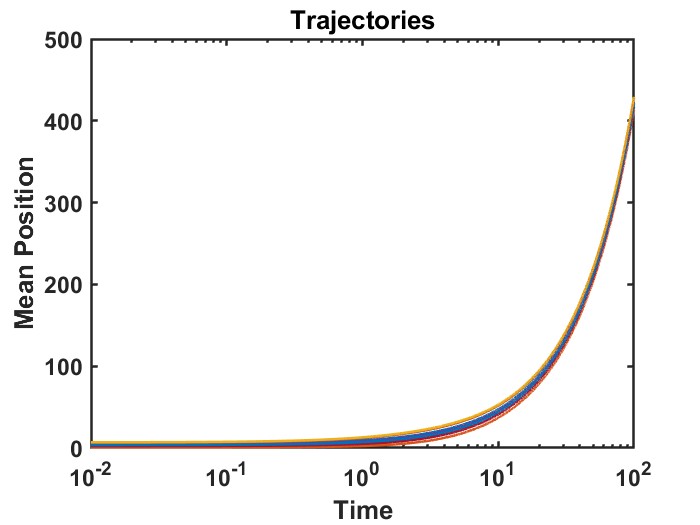}
 \includegraphics[width=0.4\textwidth]{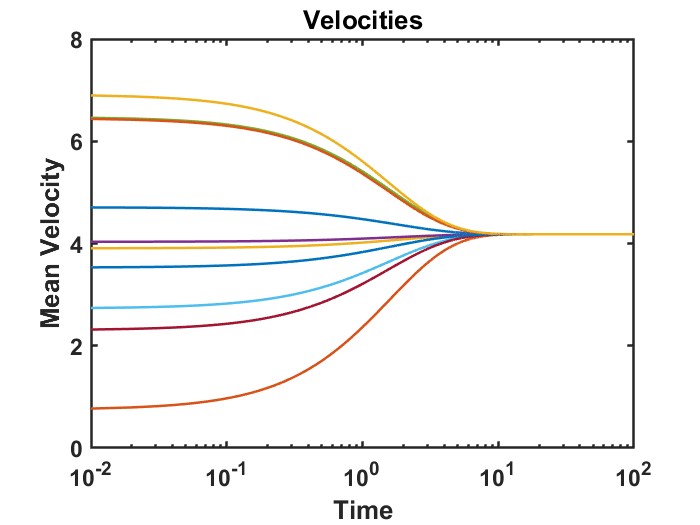}
  \caption{Cucker-Smale flocking model: uncontrolled trajectories (left) and velocities (right) for $\beta = 0.1$ are shown. Consensus is always achieved for values of the smoothness parameter $\beta \leq 1/2$.}
    \label{fig:beta01}
\end{figure}
\begin{figure}[htbp]
    \centering
\includegraphics[width=0.4\textwidth]{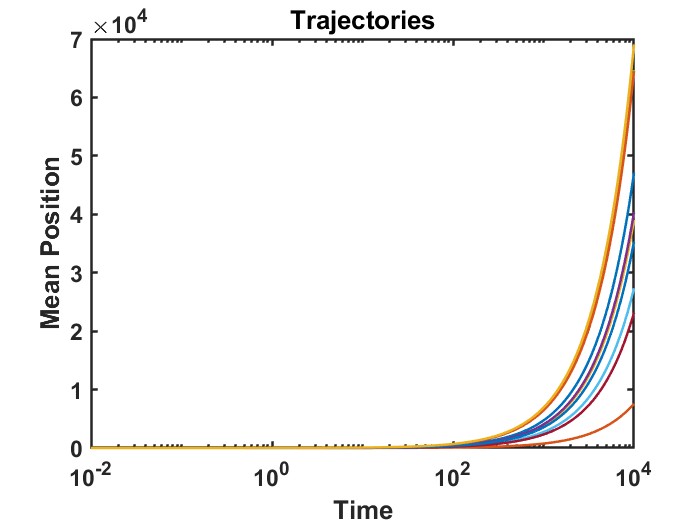}
\includegraphics[width=0.4\textwidth]{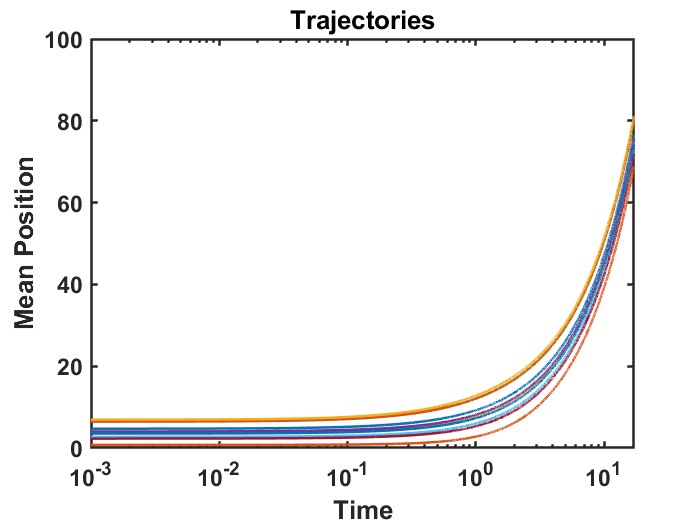}
\includegraphics[width=0.4\textwidth]{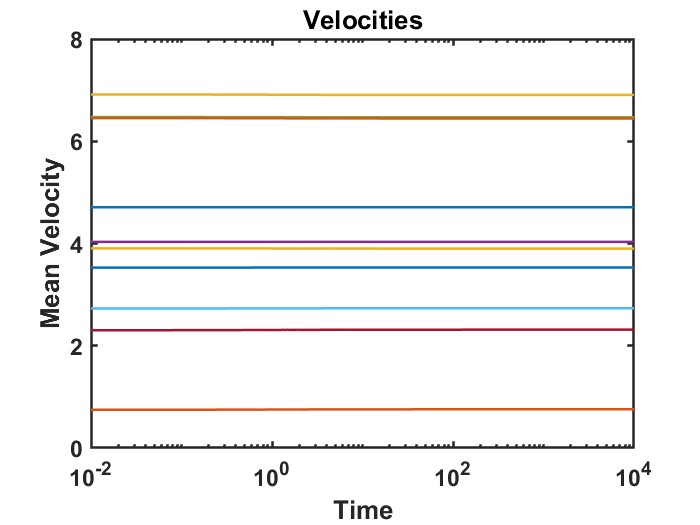}
\includegraphics[width=0.4\textwidth]{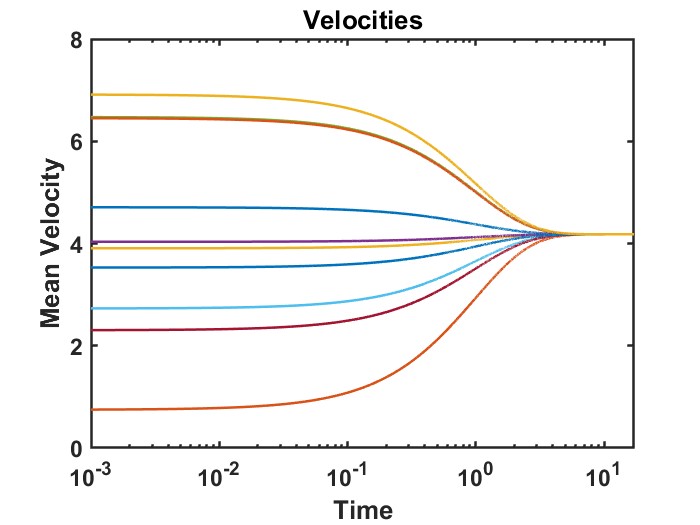}
\includegraphics[width=0.4\textwidth]{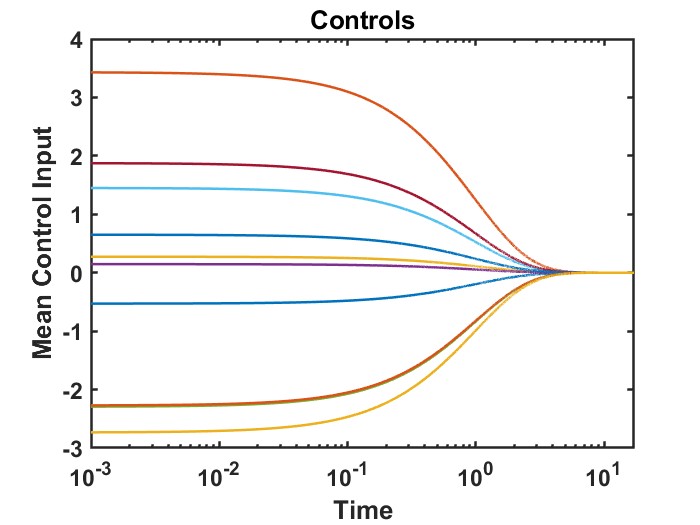}
\includegraphics[width=0.4\textwidth]{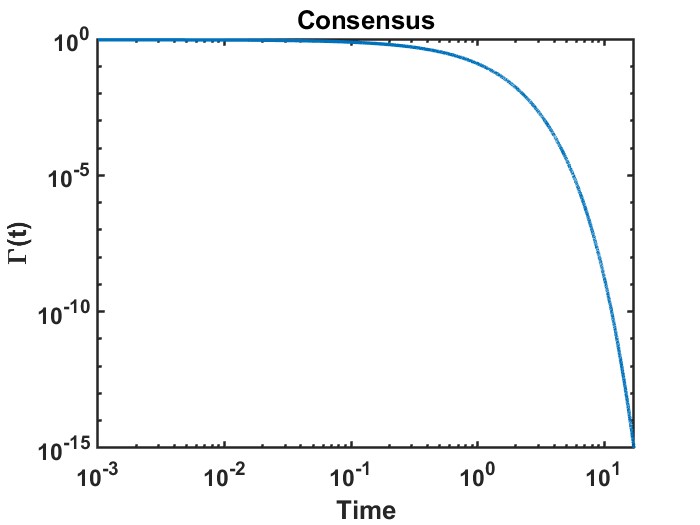}
\caption{Cucker-Smale flocking model. Uncontrolled and direct Z-controlled dynamics for $\beta = 1$ and $\lambda=1$. Top panels: uncontrolled (left) and controlled (right) agent mean value trajectories. Central panels: uncontrolled (left) and controlled (right) mean velocity evolution. Bottom panels: the control inputs (left) and the consensus parameter (right) over time. Initial conditions are selected such that consensus is not naturally achieved  ($\beta > 1/2$).}
    \label{fig:beta1}
\end{figure}

 
\subsubsection{Indirect control of velocities through positions}
\label{sec:2ind_pos}

Our objective is to design a Z-controlled model
\[
\begin{cases}
\dot{x}_i = v_i + u_i, \\[6pt]
\dot{v}_i = \sum\limits_{j=1}^{N} a_{ij}(x)(v_j - v_i),
\end{cases}
\quad i = 1, \dots, N,
\]
by defining suitable control inputs \( u_i(t) \) such 
that 
the velocities \( v_i(t) \) converge exponentially to their average $\bar{v}(t)$, with convergence rate \( \lambda > 0 \). 

To find the expression of $u_i$ first of all we use the second error formula for the error $e_i(t):=v_i(t)-\bar v(0)$:

$$
\Ddot e_i(t)\,+\, 2\, \lambda\, \dot e_i(t) +\lambda^2\, e_i(t)=0_d
$$

Since $\bar{v}(0)$  is constant, we have:
$
\dot{e}(t) = \dot{v}(t), \, \ddot{e}(t) = \ddot{v}(t)$
 and the error equation reads
 \begin{equation}\label{eq:error}
     \Ddot v_i(t)  +\, 2\, \lambda\, \dot v_i(t) +\lambda^2\, (v_i\,-\, \bar v(0))=0_d.
  \end{equation}

\begin{remark}
The error-design formula requires replacing the dynamics of $v_i$ with the exact solution of \eqref{eq:error}. 
The solution is
\[
v_i(t)=\bar v(0) 
+ e^{-\lambda t}\Big[\,v_i(0)-\bar v(0)
+ t\big(\dot v_i(0)+\lambda\,(v_i(0)-\bar v(0))\big)\Big].
\]
If, in addition, the initial derivative satisfies the consensus form
\[
\dot v_i(0)=\sum_{j=1}^{N} a_{ij}(x)\big(v_j(0)-v_i(0)\big),
\]
  then, due to weight-balance property of the matrix  \( A = (a_{ij}(x)) \),
\[
\frac{1}{N}\sum_{i=1}^{N} v_i(t)=\bar v(0)\quad\text{for all }t\ge 0,
\]
i.e., the average $\bar v(t)$ remains constant.
\end{remark}

Let us evaluate 
$$
\Ddot v_i(t)= \frac{d}{dt} \dot v_i(t)\, =\frac{d}{dt} \sum_{j=1}^{N} a_{ij}(x)(v_j - v_i) = \sum_{j=1}^{N} \frac{d}{dt}\Big( a_{ij}(x)(v_j - v_i)\Big)= \sum_{j=1}^{N} \dot  a_{ij}(x)(v_j - v_i) + a_{ij}(x)(\dot v_j - \dot v_i).
$$

Moreover,
\[
\begin{array}{rcl}
   \dot a_{ij}(x(t))   & =&   -\frac{2 \beta K}{N} \cdot \left(1 + \|x_i - x_j\|^2\right)^{-\beta - 1} \, (x_i - x_j)^\top(\dot{x}_i - \dot{x}_j) \\
     &=  &-\frac{2 \beta K}{N} \cdot \left(1 + \|x_i - x_j\|^2\right)^{-\beta - 1} \left[ (x_i - x_j)^\top(v_i - v_j) + (x_i - x_j)^\top(u_i - u_j) \right] \\
     &=&  b_{ij}(x) \, (x_i - x_j)^\top(v_i - v_j) +  b_{ij}(x)\, (x_i - x_j)^\top(u_i - u_j),
\end{array}
\]
where 
\begin{equation}\label{eq:bij}
b_{ij}(x):= -\dfrac{2 \beta K}{N} \cdot \left(1 + \|x_i - x_j\|^2\right)^{-\beta - 1}.
\end{equation}

Hence,
\[
\ddot v_i(t)=  \sum_{j=1}^{N} \left[
b_{ij}(x)\, (x_i - x_j)^\top(v_i - v_j)\, (v_j - v_i)
+ b_{ij}(x)\, (x_i - x_j)^\top(u_i - u_j)\, (v_j - v_i)
+ a_{ij}(x)(\dot v_j - \dot v_i)
\right].
\]

Moreover,
\[
a_{ij}(x)(\dot{v}_j - \dot{v}_i) = \sum_{k=1}^N a_{ij}(x) \left( a_{jk}(x) - a_{ik}(x) \right) v_k\, +\, a_{ij}(x) r_i(x) \, v_i \,-\,a_{ij}(x) r_j(x)\, v_j
\]
where 
\begin{equation}
    \label{eq:def_ri}
 r_i(x) =\sum_{k=1}^N a_{ik}(x), \quad
  r_j(x)=\sum_{k=1}^N a_{jk}(x).
 \end{equation}

Substituting everything into the error dynamics equation (\ref{eq:error}) we obtain
\begin{align*}
\sum_{j=1}^N b_{ij}(x)\,(v_j - v_i)(x_i - x_j)^\top (u_i - u_j)
&= -\Big[
\sum_{j=1}^N b_{ij}(x)\,(x_i - x_j)^\top (v_i - v_j)(v_j - v_i) \\
&+ \sum_{j=1}^N \sum_{k=1}^N a_{ij}(x)\big(a_{jk}(x) - a_{ik}(x)\big)\,v_k \\
&  \,+r_i^2(x)\, v_i \,-\,  \sum_{j=1}^N a_{ij}(x) \,r_j(x)\, v_j \\
&+ 2\lambda \sum_{j=1}^N a_{ij}(x)(v_j - v_i)
+ \lambda^2\,(v_i - \bar{v})
\Big].
\end{align*}


We define the block matrix \(\mathbb{L}_B =  \mathbb{L}_B(x,v) \in \mathbb{R}^{Nd \times Nd} \), 
structured as an \( N \times N \) matrix of blocks \( (\mathbb{L}_B)_{ij} \in \mathbb{R}^{d \times d} \), given by:
\[   (\mathbb{L}_B)_{ij} =
\begin{cases}
\sum\limits_{k \ne i} b_{ik}(x)\, (v_k - v_i)(x_i - x_k)^\top & \text{if } i = j \\[6pt]
- b_{ij}(x)\, (v_j - v_i)(x_i - x_j)^\top & \text{if } i \ne j.
\end{cases}
\]

Then, the matrix system becomes blockwise:
\[
\sum_{j=1}^N (\mathbb{L}_B)_{ij} \, u_j= - \mathcal{R}_i, \quad \text{for each } i = 1, \dots, N,
\]
where \( \mathcal{R}_i = \mathcal{R}_i(x,v;\lambda) \in \mathbb{R}^{d} \) is given by:
\begin{equation}\label{eq:R_iuno}
\begin{aligned}
\mathcal{R}_i
&= \sum_{j=1}^{N}b_{ij}(x)\,(x_i - x_j)^\top (v_i - v_j)\,(v_j - v_i)
   + \sum_{j,k=1}^{N}a_{ij}(x)\big(a_{jk}(x) - a_{ik}(x)\big)\,v_k \\
&\quad + \,r_i^2(x)\, v_i \,-\,  \sum_j a_{ij}(x) \,r_j(x)\, v_j + 2\lambda \sum_{j=1}^{N}a_{ij}(x)\,(v_j - v_i)
   + \lambda^2\,(v_i - \bar v).
\end{aligned}
\end{equation}

Equivalently, in matrix form
\begin{equation}\label{eq:syst1}
\mathbb{L}_B\,U = -\mathcal{R},\end{equation}
where $
U = \begin{bmatrix} u_1 \\ u_2 \\ \vdots \\ u_N \end{bmatrix} \in \mathbb{R}^{Nd}$ and  $\mathcal{R} = \begin{bmatrix} \mathcal{R}_1 \\ \mathcal{R}_2 \\ \vdots \\ \mathcal{R}_N \end{bmatrix} \in \mathbb{R}^{Nd}$.







We finally compute 
$U$ by the least-squares \eqref{eq:baseline-stacked} (see Section~\ref{sec:stacked-solve}). 

For $K=1$ and $\beta=1$ the result of the action of the indirect   Z-control on velocities through positions is shown in Figure~\ref{fig:beta1_indirect}.  The figure shows that the control successfully enforces alignment of velocities, as evidenced by the convergence of the mean velocities (top right panel) and the rapid decay of the consensus parameter $\Gamma(t)$ (bottom right panel). The control inputs (reported in the bottom left panel) exhibit a transient phase with relatively large magnitudes before vanishing, indicating that most of the effort is concentrated at the beginning of the process. This confirms the ability of the indirect strategy to achieve velocity consensus even in regimes where it would not naturally occur ($\beta \geq \frac{1}{2}$).

\begin{figure}[htbp]
    \centering
\includegraphics[width=0.4\textwidth]{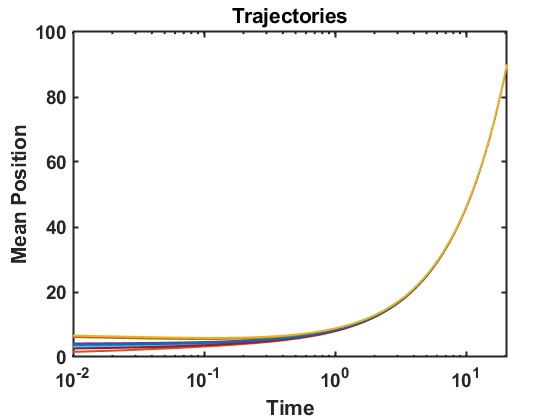}
\includegraphics[width=0.4\textwidth]{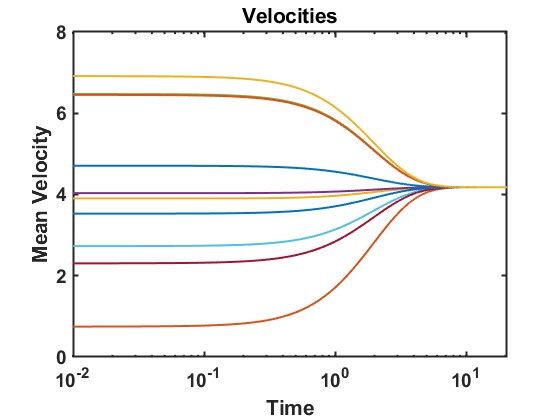}
\includegraphics[width=0.4\textwidth]{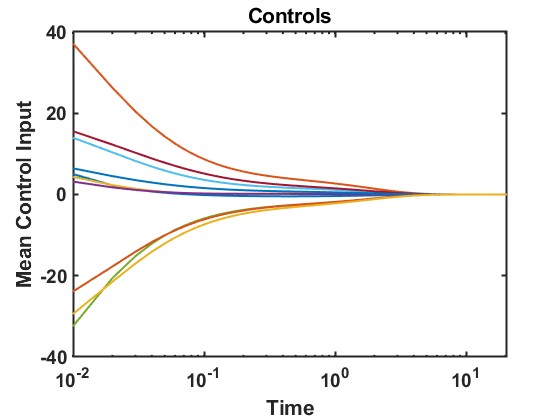}
\includegraphics[width=0.4\textwidth]{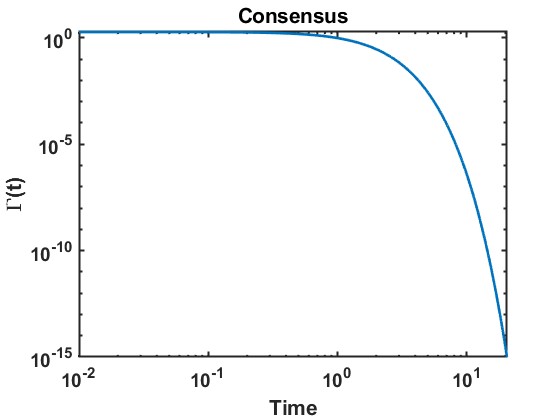}
\caption{Cucker-Smale flocking model. Indirect Z-controlled system dynamics for $\beta = 1$ and $\lambda=1$. The first row shows agent mean value trajectories and  mean velocity evolution, and the second row shows the control inputs and the consensus parameter over time. Initial conditions are selected such that consensus is not naturally achieved  ($\beta > 1/2$).}
    \label{fig:beta1_indirect}
\end{figure}

\subsection{Third-Order generalized Cucker-Smale Flocking Model}
The model is described by the following system of equations:
\begin{equation}\label{eq:CS3}
    \begin{cases}
    \dot{x}_i = v_i, \\[6pt]
    \dot{v}_i = z_i \\[6pt] \dot{z}_i = \sum_{j=1}^{N} a_{ij}(x)(z_j - z_i), 
    \end{cases}
    \quad a_{ij}(x) = \frac{K}{N(1 + \|x_i - x_j\|^2)^\beta}, \quad i,j  = 1, \dots, N, i\neq j,
\end{equation}
where \( K > 0 \) denotes the interaction strength and \( \beta \geq 0 \) determines the rate at which interaction decays with distance.

\subsubsection{Direct control on accelerations}
\noindent We investigate how the Z-control mechanism affects the emergence of consensus in accelerations.
 Our objective is to design a Z-controlled model
\[
\begin{cases}
\dot{x}_i = v_i, \\[6pt]
\dot{v}_i = z_i, \\[6pt] \dot{z}_i = \sum_{j=1}^{N} a_{ij}(x)(z_j - z_i) + u_i, 
\end{cases}
\quad i = 1, \dots, N,
\]
by defining controls $u_i(t)$ so that 
the mean of accelerations $\bar z(t) = \bar z(0)= \bar z$ and,
$z_i(t)\to \bar z$ exponentially with  rate $\lambda$.

Following the same steps as before 
$$
u_i= -\lambda (z_i\,-\, \bar z) - \sum_{j=1}^{N}a_{ij}(x)(z_j - z_i). \,
$$ 
Figure~\ref{fig:beta1_third} shows the effect of the direct Z-control on the third-order generalized Cucker-Smale system \eqref{eq:CS3} for $K =1$, $\beta =1$ and $\lambda=1$. The panels illustrate the evolution of positions, velocities, accelerations, control inputs, and the consensus parameter $\Gamma(t)$. In the uncontrolled case (left column), agent trajectories and velocities exhibit unbounded growth, and accelerations remain separated, indicating the absence of natural consensus. Conversely, under Z-control (right column), accelerations converge toward alignment, whereas positions and velocities stabilize relative to the uncontrolled case. This is confirmed by the rapid decay of $\Gamma(t)$ in the bottom-right panel. The control inputs (bottom-left panel) remain bounded and act primarily during the initial transient phase, demonstrating the effectiveness of the strategy in enforcing consensus even in higher-order dynamics where $\beta \geq \frac{1}{2}$ would otherwise prevent alignment.

\begin{figure}[htbp]
    \centering
\includegraphics[width=0.4\textwidth]{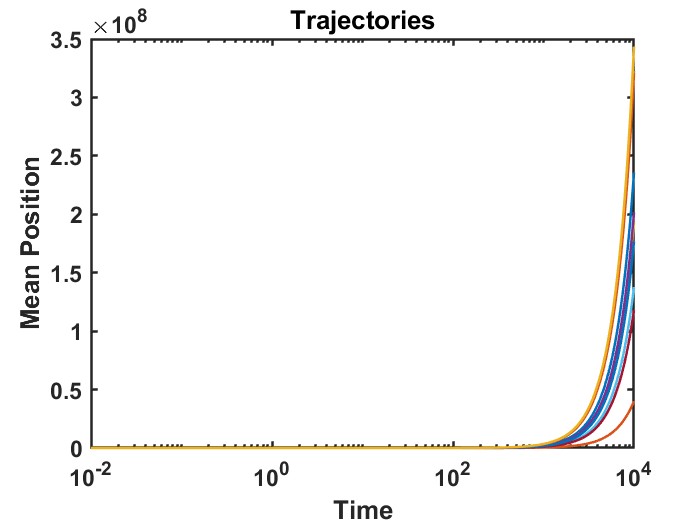}
\includegraphics[width=0.4\textwidth]{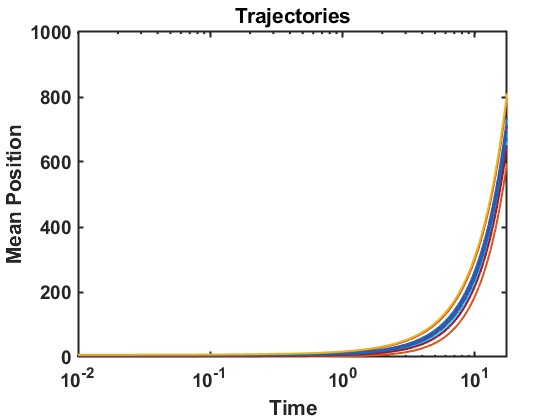}
\includegraphics[width=0.4\textwidth]{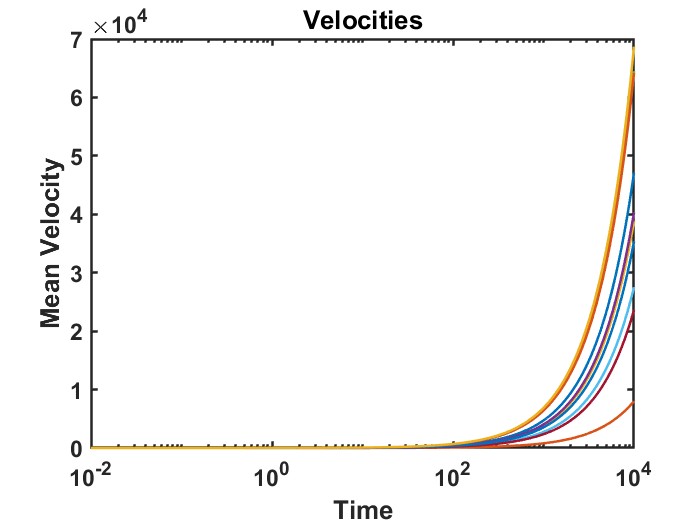}
\includegraphics[width=0.4\textwidth]{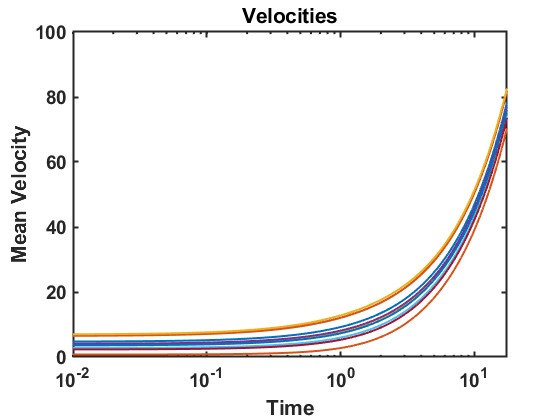}
\includegraphics[width=0.4\textwidth]{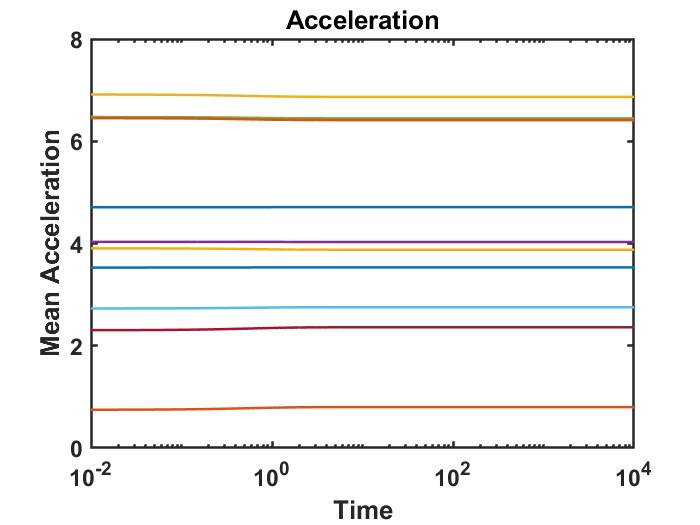}
\includegraphics[width=0.4\textwidth]{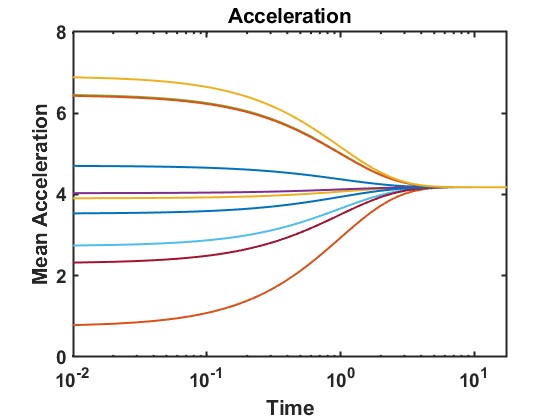}
\includegraphics[width=0.4\textwidth]{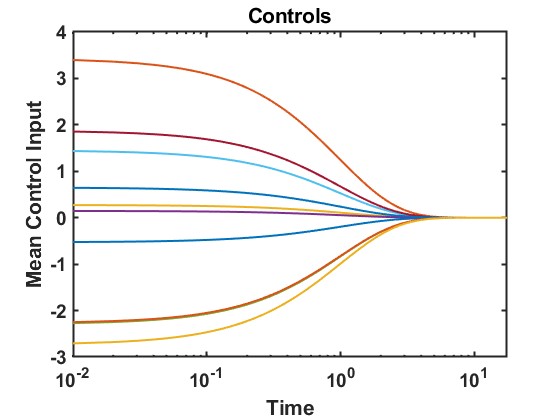}
\includegraphics[width=0.4\textwidth]{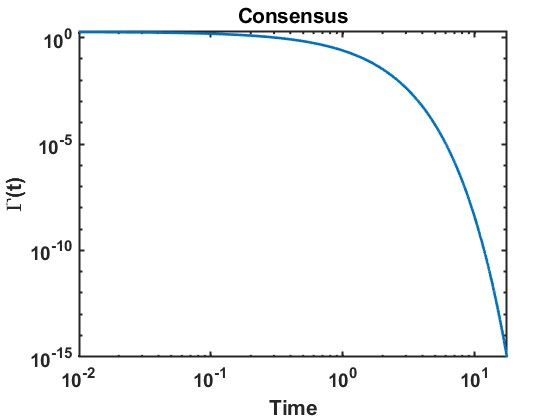}
\caption{Third-order generalized Cucker-Smale system. Uncontrolled and direct Z-controlled flocking dynamics for $\beta =1$ and $\lambda =1$. First row: uncontrolled (left) and controlled (right) agent mean value trajectories. Second row: uncontrolled (left) and
controlled (right) mean velocity evolution. Third row: uncontrolled (left) and
controlled (right) mean acceleration evolution. Bottom panels: the control inputs (left) and the consensus parameter (right)
over time. Initial conditions are selected such that consensus is not naturally achieved, highlighting the effectiveness of the Z-control strategy.}
    \label{fig:beta1_third}
\end{figure}

\subsubsection{Indirect control of accelerations through positions}
\label{sec:3D_ind_pos}
Our objective is to design a Z-controlled model
\[
\begin{cases}
\dot{x}_i = v_i + u_i \\[6pt] 
\dot{v}_i = z_i 
\\[6pt] 
\dot{z}_i = \sum\limits_{j=1}^{N} a_{ij}(x)(z_j - z_i),
\end{cases}
\quad i = 1, \dots, N,
\]
by defining suitable control inputs \( u_i(t) \) acting on $x_i$ such 
that the accelerations \( z_i(t) \) converge exponentially to their average $\bar{z}$, with convergence rate \( \lambda > 0 \). The average acceleration \( \bar{z}(t) \) remains constant throughout the evolution due to the symmetry of the matrix \( A = (a_{ij}(x)) \). 

As before, to find the expression of $u_i$, we need to use the second error formula for the error $e_i(t):=z_i(t)-\bar z(0)$:

$$
\Ddot e_i(t)\,+\, 2\, \lambda\, \dot e_i(t) +\lambda^2\, e_i(t)=0_d
$$
and  the error equation reads
 \begin{equation}\label{eq:error2}
     \Ddot z_i(t)  +\, 2\, \lambda\, \dot z_i(t) +\lambda^2\, (z_i\,-\, \bar z(0))=0_d.
  \end{equation}
  \begin{remark}
The error-design formula requires replacing the dynamics of $z_i$ with the exact solution of \eqref{eq:error2}. 
The solution is
\[
z_i(t)=\bar z(0) 
+ e^{-\lambda t}\Big[\,z_i(0)-\bar z(0)
+ t\big(\dot z_i(0)+\lambda\,(z_i(0)-\bar z(0))\big)\Big].
\]
If, in addition, the initial derivative satisfies the consensus form
\[
\dot z_i(0)=\sum_{j=1}^{N} a_{ij}(x)\big(z_j(0)-z_i(0)\big),
\]
  then, due to weight-balance property of the matrix  \( A = (a_{ij}(x)) \),
\[
\frac{1}{N}\sum_{i=1}^{N} z_i(t)=\bar z(0)\quad\text{for all }t\ge 0,
\]
i.e., the average $\bar z(t)$ remains constant.
\end{remark}

Let us evaluate 
$$
\Ddot z_i(t)=  \sum_{j=1}^{N} \dot  a_{ij}(x)(z_j - z_i) + a_{ij}(x)(\dot z_j - \dot z_i).
$$

With the same notation used  in \eqref{eq:bij}  
\[
\begin{array}{rcl}
   \dot a_{ij}(x)   & =& 
     b_{ij}(x) \, (x_i - x_j)^\top(v_i - v_j) + b_{ij}(x)\, (x_i - x_j)^\top(u_i - u_j)\, 
\end{array}
\]

and, 
\[
\ddot z_i(t)=  \sum_{j=1}^{N} \left[
b_{ij}(x)\, (x_i - x_j)^\top(v_i - v_j)\, (z_j - z_i)
+ b_{ij}(x)\, (x_i - x_j)^\top(u_i - u_j)\,( z_j - z_i)
+ a_{ij}(x)(\dot z_j - \dot z_i)
\right].
\]

Moreover, 
\[
a_{ij}(x)(\dot{z}_j - \dot{z}_i) = \sum_{k=1}^N a_{ij}(x) \left( a_{jk}(x) - a_{ik}(x) \right) z_k\, +\, a_{ij}(x) r_i(x) \, z_i \,-\,a_{ij}(x) r_j(x)\, z_j
\]
with $r_i$ and $r_j$ are defined in \eqref{eq:def_ri}.
Substituting everything into the error dynamics equation (\ref{eq:error2})
we obtain:
\begin{align*}
\sum_{j=1}^N b_{ij}(x)\,(z_j - z_i)(x_i - x_j)^\top (u_i - u_j)
&= -\Big[\sum_{j=1}^N b_{ij}(x)\,(x_i - x_j)^\top (v_i - v_j)(z_j - z_i)\\
&+ \sum_{j,k=1}^{N} a_{ij}(x)\big(a_{jk}(x) - a_{ik}(x)\big) z_k\\
& +r_i^2(x)\, z_i \,-\,  \sum_{j=1}^N a_{ij}(x) \,r_j(x)\, z_j \\
 & + 2\lambda \sum_{j=1}^N a_{ij}(x)(z_j - z_i)
  + \lambda^2\big(z_i - \bar z\big)\Big].
\end{align*}


With abuse of notations, we define the block matrix \(\mathbb{L}_B =  \mathbb{L}_B(x,z) \in \mathbb{R}^{Nd \times Nd} \), 
structured as an \( N \times N \) matrix of blocks \( (\mathbb{L}_B)_{ij} \in \mathbb{R}^{d \times d} \), given by:
\begin{equation}\label{eq:LB_block_zx}
    (\mathbb{L}_B)_{ij} =
\begin{cases}
\sum\limits_{k \ne i} b_{ik}(x)\, (z_k - z_i)(x_i - x_k)^\top & \text{if } i = j \\[6pt]
- b_{ij}(x)\, (z_j - z_i)(x_i - x_j)^\top & \text{if } i \ne j
\end{cases}
\end{equation}

Then, the matrix system becomes blockwise:
\[
\sum_{j=1}^N (\mathbb{L}_B)_{ij} \, u_j= - \mathcal{R}_i, \quad \text{for each } i = 1, \dots, N,
\]
where \(\mathcal{R}_i = \mathcal{R}_i(x,v,z;\lambda) \in \mathbb{R}^{d} \) is given by:
\begin{equation}\label{eq:R_idue}
\begin{aligned}
\mathcal{R}_i
&= \sum_{j=1}^{N}b_{ij}(x)\,(x_i - x_j)^\top (v_i - v_j)\,(z_j - z_i)
   + \sum_{j,k=1}^{N}a_{ij}(x)\big(a_{jk}(x) - a_{ik}(x)\big)\,z_k \\
&\quad  + \,r_i^2(x)\, z_i \,-\,  \sum_j a_{ij}(x) \,r_j(x)\, z_j + 2\lambda \sum_{j=1}^{N}a_{ij}(x)\,(z_j - z_i)
   + \lambda^2\,(z_i - \bar z).
\end{aligned}
    \end{equation}

In matrix form
\begin{equation}\label{eq:syst2}
\mathbb{L}_B\,U = -\mathcal{R},
\end{equation}
where $
U = \begin{bmatrix} u_1 \\ u_2 \\ \vdots \\ u_N \end{bmatrix} \in \mathbb{R}^{Nd}$ and  $\mathcal{R} = \begin{bmatrix} \mathcal{R}_1 \\ \mathcal{R}_2\\ \vdots \\ \mathcal{R}_N\end{bmatrix} \in \mathbb{R}^{Nd}$.

 Figure~\ref{fig:3D_indirect_pos} illustrates the behavior of the indirect Z-control on accelerations through positions, with $\beta = 1$, $K=1$ and $\lambda = 1$. The initial conditions are chosen so that consensus is not naturally achieved. The top row shows the evolution of agent trajectories and velocities, whereas the second row reports oscillatory behavior in control inputs (right), which do not decay to zero over time. This behavior is consistent with driving acceleration consensus through positions: the controlled variable must sustain nonvanishing inputs to enforce consensus at a higher derivative level. In contrast, when the target variable is controlled directly, or indirectly, via its immediate predecessor, control inputs vanish asymptotically.
 Notably, the acceleration profiles (bottom left) converge over time, indicating consensus on agent accelerations.
 
\begin{figure}[htbp]
    \centering
\includegraphics[width=0.4\textwidth]{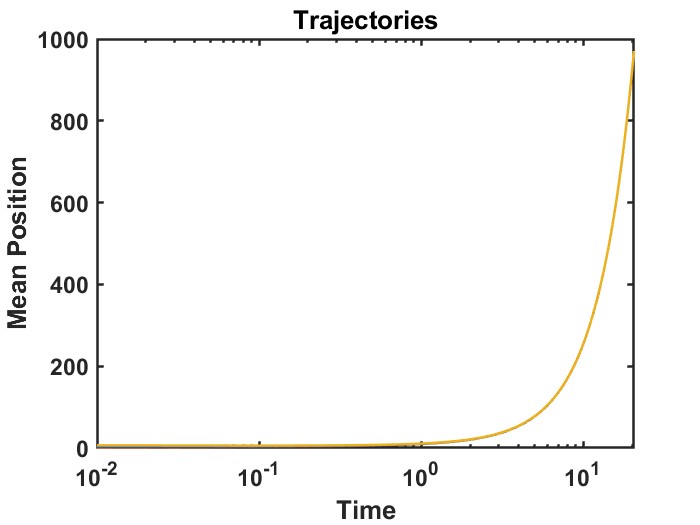}
\includegraphics[width=0.4\textwidth]{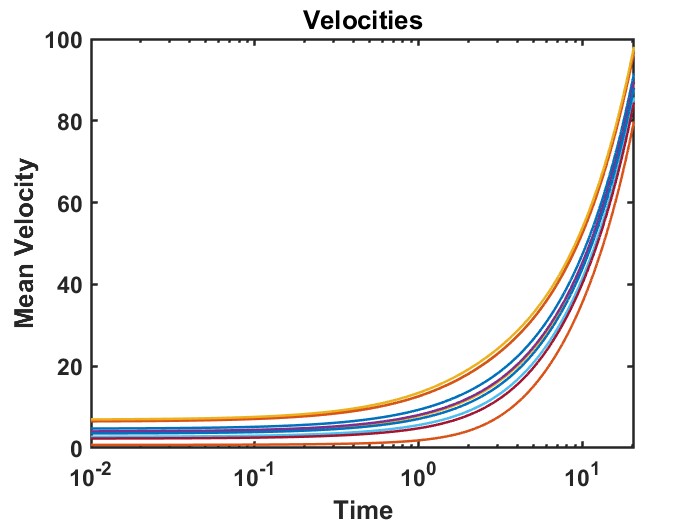}
\includegraphics[width=0.4\textwidth]{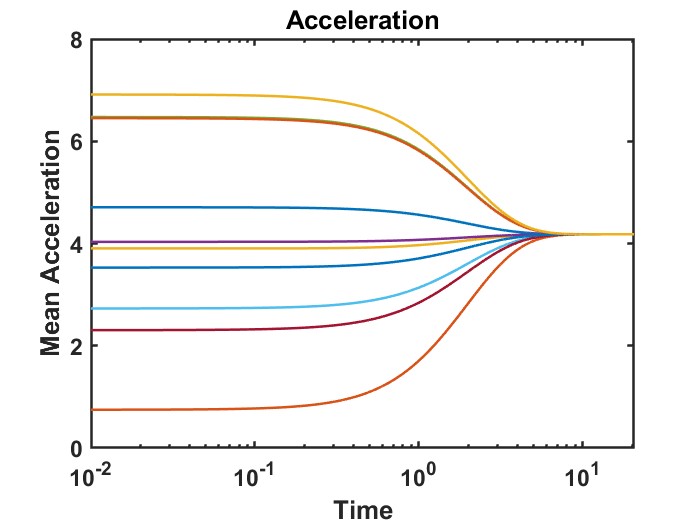}
\includegraphics[width=0.4\textwidth]{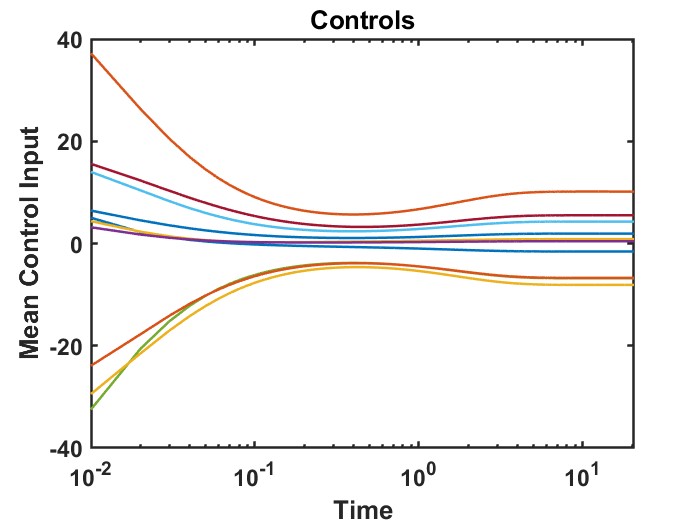}
\caption{Third-order generalized Cucker-Smale system. Indirect  Z-controlled system through positions for $\beta = 1$ and $\lambda=1$. The top panel shows agent mean value trajectories (left) and  mean velocity evolution (right), whereas the center panel shows the acceleration (left) and control inputs (right) over time. Initial conditions are selected such that consensus is not naturally achieved ($\beta > 1/2$). 
}
    \label{fig:3D_indirect_pos}
\end{figure}

\subsubsection{Indirect control of accelerations through velocities}\label{subsec:third-order-vel}

We consider
\[
\begin{cases}
\dot{x}_i = v_i, \\[3pt]
\dot{v}_i = z_i + u_i, \\[3pt]
\dot{z}_i = \sum\limits_{j=1}^{N} a_{ij}(x)\,(z_j - z_i),
\end{cases}
\qquad i=1,\dots,N,
\]
and design \(u_i(t)\) so that \(z_i(t)\) converges exponentially to the average \(\bar z\) with rate \(\lambda>0\), acting on the velocities. We impose
\begin{equation}\label{eq:third-order-z}
\dddot{z}_i + 3\lambda\,\ddot{z}_i + 3\lambda^2\,\dot{z}_i + \lambda^3\,(z_i-\bar z)=0_d.
\end{equation}

 For \(i\neq j\) set
\[
\rho_{ij}:=1+\|x_i-x_j\|^2,\quad
s_{ij}:=(x_i-x_j)^\top(v_i-v_j),\quad
q_{ij}:=\|v_i-v_j\|^2,\quad
r_{ij}:=(x_i-x_j)^\top(z_i-z_j),
\]
and define
\[
a_{ij}(x)=\dfrac{K}{N}\,\rho_{ij}^{-\beta},\qquad
b_{ij}(x)=-\dfrac{2\beta K}{N}\,\rho_{ij}^{-\beta-1}.
\]
Then,
\begin{equation}\label{eq:dot-aij}
\dot a_{ij}(x)= b_{ij}(x)\,(x_i-x_j)^\top(v_i-v_j)= b_{ij}\,s_{ij},
\end{equation}
and, using \(\dot v_i=z_i+u_i\),
\begin{equation}\label{eq:ddot-aij}
\begin{aligned}
\ddot a_{ij}(x)
&=\ddot a^{(0)}_{ij}(x)
\;+\; b_{ij}(x)\,(x_i-x_j)^\top(u_i-u_j).
\end{aligned}
\end{equation}
where $\ddot a^{(0)}_{ij}(x) =\dfrac{1}{N} \Big[4\beta(\beta+1)K\,\rho_{ij}^{-\beta-2}\,s_{ij}^2
-2\beta K\,\rho_{ij}^{-\beta-1}\,(q_{ij}+r_{ij})\Big]$.
Evaluate
\begin{equation}\label{eq:dotz}
\dot z_i = \sum_{j=1}^N a_{ij}(x)\,(z_j-z_i).
\end{equation}

Differentiating \eqref{eq:dotz} and using \eqref{eq:dot-aij},
\begin{equation}\label{eq:ddotz-pre}
\ddot z_i = \sum_{j=1}^N \Big( \dot a_{ij}(x)\,(z_j-z_i) + a_{ij}(x)\,(\dot z_j-\dot z_i) \Big).
\end{equation}
Since
\begin{equation*}
\dot z_j-\dot z_i = \sum_{k=1}^N \big(a_{jk}(x)-a_{ik}(x)\big)\,z_k-r_j(x)\,z_j+r_i(x)\,z_i,
\end{equation*}
where $r_i$ and $r_j$ defined in \eqref{eq:def_ri}, we have the fully expanded form
\begin{equation}\label{eq:ddotz}
\ddot z_i
= \sum_{j=1}^N \big(b_{ij}\,s_{ij}\big)\,(z_j-z_i)
\;+\; \sum_{j=1}^N\sum_{k=1}^N a_{ij}(x)\,\big(a_{jk}(x)-a_{ik}(x)\big)\,z_k + r_i^2(x)\,z_i - \sum_{j=1}^{N} a_{ij}(x)\,r_j(x)\,z_j.
\end{equation}
Differentiating \eqref{eq:ddotz-pre} once more and using \eqref{eq:ddot-aij}, \eqref{eq:dot-aij}, and
\begin{equation*}
\begin{aligned}
\ddot z_j-\ddot z_i
&= \sum_{k=1}^N \left(\dot{a}_{jk}(x)-\dot{a}_{ik}(x)\right)\,z_k + \sum_{k=1}^N \left(a_{jk}(x)-a_{ik}(x)\right)\,\dot{z}_k \\
&\quad -\dot{r}_{j}(x) \, z_j - r_j(x)\,\dot{z}_j +\dot{r}_{i}(x) \, z_i + r_i(x)\,\dot{z}_i,
\end{aligned}
\end{equation*}
we obtain
\begin{equation}
\label{eq:dddotz}
\begin{aligned}
\dddot z_i
&= \sum_{j=1}^N \ddot a_{ij}(x)\,(z_j-z_i)
\;+\; 2\sum_{j=1}^N \dot a_{ij}(x)\,(\dot z_j-\dot z_i)
\;+\; \sum_{j=1}^N a_{ij}(x)\,(\ddot z_j-\ddot z_i) \\[3pt]
&= \sum_{j=1}^N \Big(\ddot a^{(0)}_{ij}(x) + b_{ij}(x)\,(x_i-x_j)^\top(u_i-u_j)\Big)\,(z_j-z_i) + 2\sum_{j=1}^N\sum_{k=1}^N \big(b_{ij}\,s_{ij}\big)\,\big(a_{jk}(x)-a_{ik}(x)\big)\,z_k \\[2pt]
&\quad + 2\,r_i(x)\,\left(\sum_{j=1}^{N} b_{ij}\,s_{ij}\right) z_i -2\sum_{j=1}^{N} b_{ij}\,s_{ij}\,r_j(x)\,z_j(x) \\[2pt]
&\quad + \sum_{j,k=1}^{N} a_{ij}(x) \left(\dot{a}_{jk}(x)-\dot{a}_{ik}(x)\right) z_k + \sum_{j,k=1}^{N} \sum_{\ell=1}^N a_{ij}(x) \, a_{k\ell}(x) \left(a_{jk}(x)-a_{ik}(x)\right) \left(z_k-z_{\ell}\right) \\
&\quad - \sum_{j=1}^N a_{ij}(x) \, \dot{r}_j(x)\, z_j - \sum_{j,k=1}^{N}  a_{ij}(x)\,a_{jk}(x) \, r_j(x)\, \left(z_k-z_j\right) + r_i(x) \dot{r}_i(x) z_i + \sum_{j=1}^N a_{ij}(x)\, r_i^2(x) \, \left(z_j-z_i\right) .
\end{aligned}
\end{equation}
Replacing \(\dot a_{jk}\) in the last two lines by \(b_{jk}\,s_{jk}\) (as in \eqref{eq:dot-aij}) yields a version of \eqref{eq:dddotz} with no time derivatives left in the right-hand side. 

Substituting \eqref{eq:ddotz} and \eqref{eq:dddotz} into \eqref{eq:third-order-z} and moving all terms containing \(u\) to the left gives, for each \(i=1,\dots,N\),
\begin{equation}\label{eq:u-lhs}
\sum_{j=1}^N b_{ij}(x)\,(z_j-z_i)\,(x_i-x_j)^\top\,(u_i-u_j)
= -\,\mathcal{R}_i,
\end{equation}
where the right-hand side contains only known quantities; explicitly, $\mathcal{R}_i = \mathcal{R}_i(x,v,z;\lambda)$ given by
\begin{equation}\label{eq:R_itre}
\begin{aligned}
\mathcal{R}_i
&= \sum_{j=1}^{N}\ddot a^{(0)}_{ij}(x)\,(z_j-z_i)
 + 2\sum_{j,k=1}^{N}\big(b_{ij}\,s_{ij}\big)\,\big(a_{jk}(x)-a_{ik}(x)\big)\,z_k + 2\,r_i(x)\,\left(\sum_{j=1}^{N}b_{ij}\,s_{ij}\right) z_i  \\[2pt]
&\quad -2\sum_{j=1}^{N}b_{ij}\,s_{ij}\,r_j(x)\,z_j(x) + \sum_{j,k=1}^{N}a_{ij}(x) \left(\dot{a}_{jk}(x)-\dot{a}_{ik}(x)\right) z_k + \sum_{j,k,\ell =1}^N a_{ij}(x) \, a_{k\ell}(x) \left(a_{jk}(x)-a_{ik}(x)\right) \left(z_k-z_{\ell}\right) \\
&\quad - \sum_{j=1}^{N}a_{ij}(x) \, \dot{r}_j(x)\, z_j - \sum_{j,k=1}^{N} a_{ij}(x)\,a_{jk}(x) \, r_j(x)\, \left(z_k-z_j\right) + r_i(x) \dot{r}_i(x) z_i + \sum_{j=1}^{N}a_{ij}(x)\, r_i^2(x) \, \left(z_j-z_i\right) \\
&\quad + 3\lambda \sum_{j}^N \big(b_{ij}\,s_{ij}\big)\,(z_j-z_i)
\;+\; 3\lambda \sum_{j,k}a_{ij}(x)\,\big(a_{jk}(x)-a_{ik}(x)\big)\,z_k + 3\lambda\,r_i^2(x)\,z_i - 3\lambda \sum_{j=1}^{N}a_{ij}(x)\,r_j(x)\,z_j \\[2pt]
&\quad + 3\lambda^2\sum_{j=1}^N a_{ij}(x)\,(z_j-z_i) + \lambda^3\,(z_i-\bar z),
\end{aligned}
\end{equation}

with \(\ddot a^{(0)}_{ij}\) and \(b_{ij}\) given by \eqref{eq:ddot-aij} and \eqref{eq:dot-aij}.

 Defining \(\mathbb{L}_B(x,z)\in\mathbb{R}^{Nd\times Nd}\) by blocks 
$(\mathbb{L}_B)_{ij}$ given in \eqref{eq:LB_block_zx}, 
then, equation \eqref{eq:u-lhs} is equivalent to
\[
\sum_{j=1}^N (\mathbb{L}_B)_{ij}\,u_j \;=\; -\,\mathcal{R}_i,
\qquad i=1,\dots,N.
\]
In matrix form
\begin{equation}\label{eq:syst3}
    \mathbb{L}_B(x,z)\,U = -\mathcal{R},
    \end{equation}
where $
U = \begin{bmatrix} u_1 \\ u_2 \\ \vdots \\ u_N \end{bmatrix} \in \mathbb{R}^{Nd}$ and  $\mathcal{R} = \begin{bmatrix} \mathcal{R}_1 \\ \mathcal{R}_2\\ \vdots \\ \mathcal{R}_N\end{bmatrix} \in \mathbb{R}^{Nd}$.

\begin{figure}[t]
    \centering
    \includegraphics[width=0.4\textwidth]{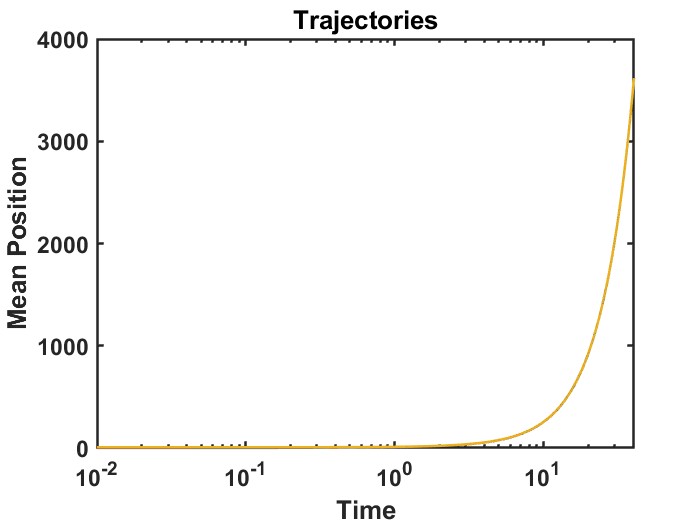}
\includegraphics[width=0.4\textwidth]{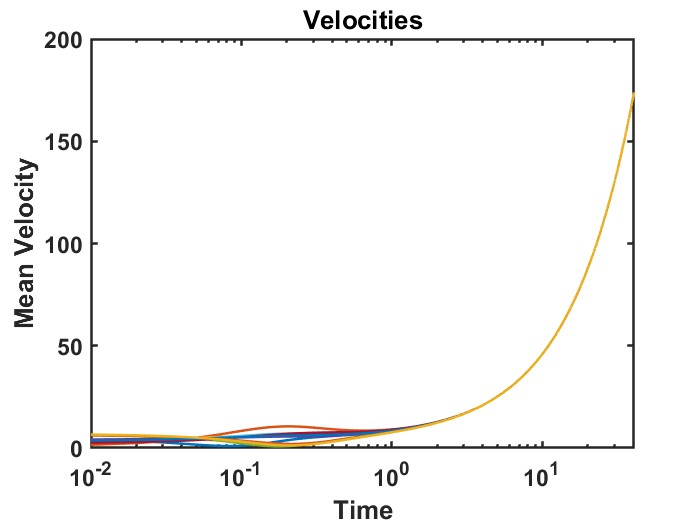}
\includegraphics[width=0.4\textwidth]{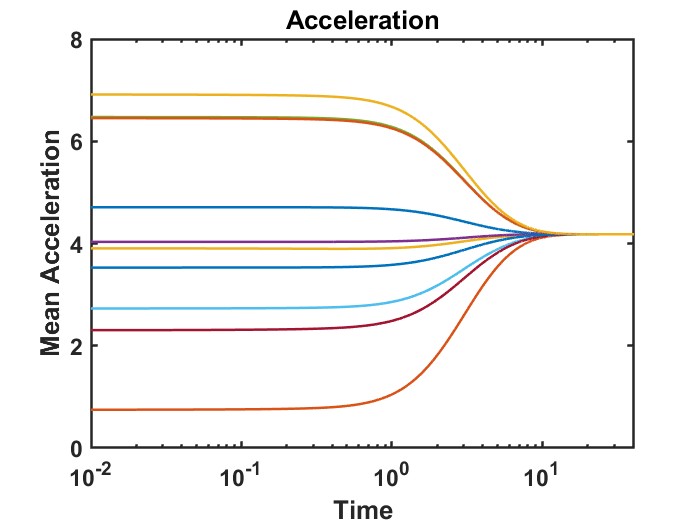}
\includegraphics[width=0.4\textwidth]{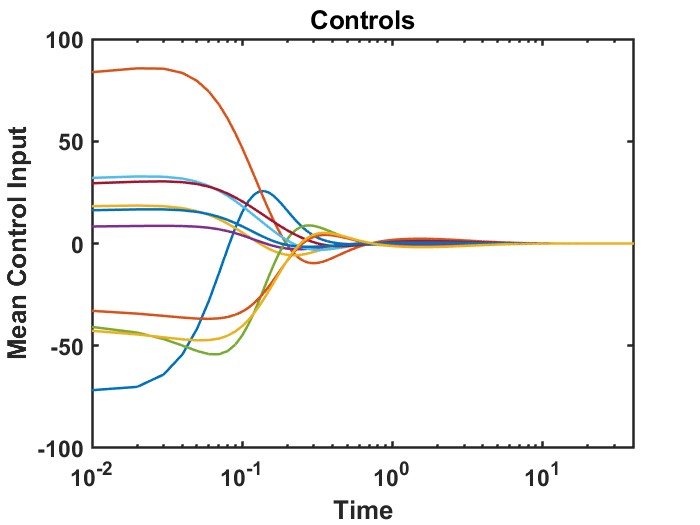}
  \caption{Third-order generalized Cucker-Smale system. Indirect Z-controlled dynamics through velocities for $\beta = 1$ and $\lambda=1$. The first row shows agent mean value trajectories (left) and mean velocity (right). The second rows reports accelerations evolution (left) and control inputs over time (right). Initial conditions are selected such that consensus is not naturally achieved. 
  }
    \label{fig:3D_indirect_vel}
\end{figure}

For $K=1$, $\beta=1$ and $\lambda =1$, the result of the action of the indirect Z-control on accelerations through velocities is shown in 
Figure~\ref{fig:3D_indirect_vel}. As in the previous case, initial conditions are selected so that consensus is not naturally achieved. The top row shows the evolution of agent trajectories (left) and velocities (right), while the second row reports acceleration profiles (left) and control inputs (right). Compared to the indirect Z-control through positions in Figure~\ref{fig:3D_indirect_pos}, the control inputs here exhibit larger transient oscillations but decay to zero over time. This is consistent with enforcing acceleration consensus through velocities: despite the higher oscillatory transient, the velocity-based strategy yields vanishing control asymptotically, unlike the control through positions case where inputs remain persistent.
\section{Unified least-squares solve for indirect Z-control}
\label{sec:stacked-solve}

In all indirect Z-control cases 
we arrive at a linear system of the form
\begin{equation}
\label{eq:LB-linear}
\mathbb{L}_B U = - \mathcal{R},
\end{equation}
 where $
 U = \begin{bmatrix} u_1 \\ u_2 \\ \vdots \\ u_N \end{bmatrix} \in \mathbb{R}^{Nd}$ and  $\mathcal{R} = \begin{bmatrix} \mathcal{R}_1 \\ \mathcal{R}_2\\ \vdots \\ \mathcal{R}_N\end{bmatrix} \in \mathbb{R}^{Nd}$.  $\mathbb{L}_B \in \mathbb{R}^{Nd \times Nd}$ is block-structured, with
$(\mathbb{L}_B)_{ij} \in \mathbb{R}^{d \times d}$, and the sums of the blocks
along each block row and each block column vanish, i.e.,
\[
\sum_{j=1}^N (\mathbb{L}_B)_{ij} = 
\sum_{j=1}^N (\mathbb{L}_B)_{ji} =  0_{d\times d}.
\]

Before addressing how to compute $U$, we first clarify when the linear system \eqref{eq:LB-linear}
is solvable. As observed above, the block structure of $\mathbb{L}_B$ implies that it is singular: in particular, all block-row sums vanish, so $\mathbb{L}_B$ has nontrivial left and right kernels. Consequently, for a given right-hand side $\mathcal{R}$, system \eqref{eq:LB-linear} is solvable only if $\mathcal{R}$ lies in the range of $\mathbb{L}_B$, or equi\-valently if it is orthogonal to $\ker(\mathbb{L}_B^\top)$. In our setting, the latter always contains the  directions $1_N \otimes w$, $w \in \mathbb{R}^d$, and this immediately yields a  compatibility condition on $\mathcal{R}$:

\begin{equation}
\label{eq:zero-sum-necessary}
\sum_{i=1}^N \mathcal{R}_i = 0_d.
\end{equation}
Indeed, if $\mathbb{L}_B U = -\mathcal{R}$, then for any $w \in \mathbb{R}^d$,
\[
0 = (1_N \otimes w)^\top \mathbb{L}_B U = - (1_N \otimes w)^\top \mathcal{R}
= - \sum_{i=1}^N \langle \mathcal{R}_i, w \rangle,
\]
which implies \eqref{eq:zero-sum-necessary}.

We now verify $\sum_{i=1}^N \mathcal{R}_i=0_{d}$ for each indirect control law. Throughout, $A(x)$ is the weight-balanced interaction matrix 
and $b_{ij}=b_{ji}$ are symmetric nonnegative weights .
We write $\bar v := \frac{1}{N}\sum_{i=1}^N v_i$ and $\bar z := \frac{1}{N}\sum_{i=1}^N z_i$.

\begin{theorem}[Indirect velocity control via positions ]\label{thm:compat-via-x}
Consider  $\mathcal{R}_i=\mathcal{R}_i(x,v,\lambda)$ in \eqref{eq:R_iuno}.
Then $\sum_{i=1}^N \mathcal{R}_i(x,v,\lambda)=0_d$.
\end{theorem}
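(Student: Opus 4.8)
The plan is to expand $\mathcal{R}_i$ from \eqref{eq:R_iuno} into its six summands and to show $\sum_{i=1}^N\mathcal{R}_i=0_d$ by splitting them into an \emph{easy} group of three terms that each vanish on their own and a \emph{hard} group of three terms that cancel only when taken together.

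For the easy group I would treat the first, the penultimate, and the last summand of \eqref{eq:R_iuno}. The first term $\sum_{i,j}b_{ij}(x)\,(x_i-x_j)^\top(v_i-v_j)\,(v_j-v_i)$ vanishes by antisymmetry: the weights satisfy $b_{ij}=b_{ji}$ and the scalar factor $(x_i-x_j)^\top(v_i-v_j)$ is invariant under the swap $i\leftrightarrow j$, while the vector factor $(v_j-v_i)$ is odd, so the full double sum over ordered pairs of an odd summand is $0_d$. The penultimate term is $2\lambda\sum_{i,j}a_{ij}(v_j-v_i)$, which is exactly the combination shown to vanish for weight-balanced $A$ in the proof of the invariance-of-average lemma. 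The last term gives $\lambda^2\sum_i(v_i-\bar v)=0_d$ directly from the definition of $\bar v$.

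The hard group consists of the three quadratic-in-$A$ summands
\[
\sum_{j,k}a_{ij}\big(a_{jk}-a_{ik}\big)v_k,\qquad r_i^2\,v_i,\qquad -\sum_{j}a_{ij}\,r_j\,v_j.
\]
Summing each over $i$ and writing them as triple sums over $(i,j,k)$, I would use $r_i=\sum_j a_{ij}=\sum_k a_{ik}$ to rewrite $\sum_i r_i^2 v_i=\sum_{i,j,k}a_{ij}a_{ik}v_i$ and $-\sum_{i,j}a_{ij}r_j v_j=-\sum_{i,j,k}a_{ij}a_{jk}v_j$, while the first term splits into $\sum_{i,j,k}a_{ij}a_{jk}v_k-\sum_{i,j,k}a_{ij}a_{ik}v_k$. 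The decisive tool is the weight-balance identity $\sum_i a_{ij}=r_j$ (column sum equals row sum), together with the definitional row sums. Applying these collapses one index in each triple sum, and I expect two clean cancellations: the two ``$r^2v$'' contributions (from $r_i^2 v_i$ and from $-a_{ij}r_jv_j$) reduce to $\sum_i r_i^2 v_i-\sum_j r_j^2 v_j=0_d$, and the two ``$r\,a\,v$'' contributions coming from the split first term reduce, after relabeling the free index, to $\sum_{i,k}r_i a_{ik}v_k-\sum_{i,k}r_i a_{ik}v_k=0_d$.

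The main obstacle is purely the index bookkeeping in the hard group: one must invoke weight balance in exactly the right slot---turning $\sum_i a_{ij}$ into $r_j$ rather than leaving it as a raw column sum---and then relabel the summation variable so that the four surviving triple sums pair off and annihilate. Once the quadratic terms are organized this way, combining the two groups yields $\sum_{i=1}^N\mathcal{R}_i=0_d$, which is the claimed compatibility condition \eqref{eq:zero-sum-necessary}.
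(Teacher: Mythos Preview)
Your proof is correct and matches the paper's approach: the antisymmetry swap for the $b_{ij}$ term, weight balance for the $2\lambda$ term, the definition of $\bar v$ for the last term, and the column-sum-equals-row-sum identity $\sum_i a_{ij}=r_j$ to collapse the quadratic-in-$A$ triple sums. One minor note: your ``hard group'' actually splits into two \emph{independent} cancellations---the two halves of term~2 cancel each other, and terms~3 and~4 cancel as a pair---which is exactly how the paper presents them (as separate sums $S_2$ and $S_3$), so your claim that they ``cancel only when taken together'' slightly overstates the interdependence.
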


\begin{proof}
We prove that the sum over $i$ of each line in \eqref{eq:R_iuno} vanishes.

Consider
\[
S_1:=\sum_{i,j=1}^N b_{ij}\,\big[(x_i-x_j)^\top (v_i-v_j)\big]\,(v_j-v_i).
\]
If we interchange the indices $(i,j)$ and $ (j,i)$ and use the symmetry $b_{ij}=b_{ji}$ together with $(v_j - v_i)=-(v_i - v_j)$, we see that each unordered pair $\{i,j\}$ contributes a vector and its opposite. Therefore all such contributions cancel in pairs and we obtain $S_1 = 0_d$.

Since $A$ is weight-balanced, it follows that 
\[
S_2:=\sum_{i,j,k=1}^N a_{ij}\,\big(a_{jk}-a_{ik}\big)\,v_k
=\sum_{k=1}^N\Big(\sum_{i,j=1}^N a_{ij}^N\,a_{jk}- \sum_{i,j=1}^N a_{ij}\,a_{ik}\Big)v_k
\]
and 
\[
S_4:=\sum_{i,j=1}^N a_{ij}\,(v_j-v_i)
=\sum_{i,j=1}^N a_{ij}\,v_j - \sum_{i,j=1}^N a_{ij}\,v_i
\]
nullify. Moreover,
\begin{align*}
    S_3 :&= \sum_{i=1}^N \Big(r_i^2\, v_i - \sum_{j=1}^N a_{ij}\,  r_j\,v_j\Big) = \sum_{i=1}^N r_i^2\, v_i - \sum_{j=1}^N  r_j \, \sum_{i=1}^N a_{ij} \,v_j
     \\
    &=
\sum_{i=1}^N r_i^2\, v_i -\sum_{j=1}^N  r_j \, \sum_{i=1}^N a_{ji}\,v_j\, = \,\sum_{i=1}^N r_i^2\, v_i -\sum_{j=1}^N  r_j \, r_j \,v_j = 0_d.
\end{align*}
Finally, $\sum_i (v_i-\bar v)=N\bar v - N\bar v=0_d$.
Adding all the terms gives $\sum_i \mathcal{R}_i=0_d$.
\end{proof}

\begin{theorem}[Indirect acceleration control via positions]\label{thm:compat-acc-via-x} Let  $\mathcal{R}_i=\mathcal{R}_i(x,v,z,\lambda)$ in \eqref{eq:R_idue}.
Then $\sum_{i=1}^N \mathcal{R}_i(x,v,z,\lambda)=0_d$.
\end{theorem}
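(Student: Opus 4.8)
The plan is to reuse, almost verbatim, the proof of Theorem~\ref{thm:compat-via-x}, exploiting that the integrand \eqref{eq:R_idue} is obtained from \eqref{eq:R_iuno} by the substitution $v\mapsto z$ in every \emph{summed} vector factor, while the scalar weight $(x_i-x_j)^\top(v_i-v_j)$ in the first line is left untouched. Concretely, I would split $\sum_{i=1}^N\mathcal{R}_i$ into the same five partial sums,
\begin{align*}
S_1 &:= \sum_{i,j=1}^N b_{ij}(x)\,\big[(x_i-x_j)^\top(v_i-v_j)\big]\,(z_j-z_i), \\
S_2 &:= \sum_{i,j,k=1}^N a_{ij}(x)\,\big(a_{jk}(x)-a_{ik}(x)\big)\,z_k, \\
S_3 &:= \sum_{i=1}^N\Big(r_i^2(x)\,z_i-\sum_{j=1}^N a_{ij}(x)\,r_j(x)\,z_j\Big), \\
S_4 &:= 2\lambda\sum_{i,j=1}^N a_{ij}(x)\,(z_j-z_i), \\
S_5 &:= \lambda^2\sum_{i=1}^N (z_i-\bar z),
\end{align*}
and show that each one vanishes separately.

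For $S_1$ I would invoke the same pairing argument as for its velocity counterpart: the scalar bracket $(x_i-x_j)^\top(v_i-v_j)$ is invariant under the swap $(i,j)\mapsto(j,i)$, as is $b_{ij}(x)=b_{ji}(x)$, whereas the vector factor $(z_j-z_i)$ is antisymmetric. Hence each ordered pair and its transpose contribute opposite vectors, the diagonal terms vanish, and $S_1=0_d$. This is the single step that deserves a word of caution, because the weight now mixes $v$ while the cancelling vector carries $z$; the argument survives precisely because only the symmetry of the scalar weight (and of $b_{ij}$) together with the antisymmetry of $(z_j-z_i)$ are ever used, and these hold regardless of the $v$/$z$ mismatch.

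For $S_2$, $S_3$, $S_4$ I would rely solely on weight balance, which makes the common row/column sum $r_i(x)=\sum_k a_{ik}(x)=\sum_k a_{ki}(x)$ of \eqref{eq:def_ri} well defined. Collecting the coefficient of each $z_k$ in $S_2$ gives $\sum_{i,j}a_{ij}a_{jk}-\sum_{i,j}a_{ij}a_{ik}=\sum_j a_{jk}\,r_j-\sum_i a_{ik}\,r_i$, two identical sums, so $S_2=0_d$; in $S_3$ the inner sum $\sum_i a_{ij}(x)=r_j(x)$ (column sum) turns the expression into $\sum_i r_i^2 z_i-\sum_j r_j^2 z_j=0_d$; and $S_4=2\lambda\sum_i\dot z_i$ vanishes by the invariance-of-average lemma applied to the top-order state $z$. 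Finally $S_5=\lambda^2(N\bar z-N\bar z)=0_d$. Adding the five contributions yields $\sum_{i=1}^N\mathcal{R}_i=0_d$. I expect no real obstacle here: the only conceptual point is confirming that the $S_1$ cancellation is unaffected by the $v$/$z$ mismatch, after which every remaining manipulation is a direct transcription of the proof of Theorem~\ref{thm:compat-via-x} with $v$ replaced by $z$.
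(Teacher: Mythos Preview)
Your proposal is correct and follows essentially the same approach as the paper, whose proof consists of the single sentence that the argument of Theorem~\ref{thm:compat-via-x} carries over with $v$ replaced by $z$. Your explicit verification that the $S_1$ cancellation survives the mixed $v$/$z$ weight is a welcome clarification the paper omits.
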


\begin{proof}
The proof follows the same steps as Theorem~\ref{thm:compat-via-x}, replacing $v$ with $z$ and applying the same cancellations criteria.
\end{proof}

\begin{theorem}[Indirect  control via velocities ]\label{thm:compat-acc-via-v}
Let us consider 
$\mathcal{R}_i$ defined in \eqref{eq:R_itre}.
Then, $\sum_{i=1}^N \mathcal{R}_i(x,v,z,\lambda)=0_d$.
\end{theorem}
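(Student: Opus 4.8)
The plan is to avoid the line-by-line cancellation used in Theorem~\ref{thm:compat-via-x}, which here would mean tracking all sixteen summands of \eqref{eq:R_itre}, and instead to exploit the structural origin of $\mathcal{R}_i$. By construction, $\mathcal{R}_i$ is exactly the $u$-independent part of the third-order error polynomial
\[
P_i := \dddot z_i + 3\lambda\,\ddot z_i + 3\lambda^2\,\dot z_i + \lambda^3\,(z_i-\bar z),
\]
obtained after moving the unique $u$-linear contribution of $\dddot z_i$ to the left-hand side of \eqref{eq:u-lhs}. So first I would record, from \eqref{eq:dotz} and \eqref{eq:ddotz}, that $\dot z_i$ and $\ddot z_i$ are $u$-free, while \eqref{eq:dddotz} shows that the only $u$-dependence in $\dddot z_i$ lives in the single term $\sum_j b_{ij}(x_i-x_j)^\top(u_i-u_j)(z_j-z_i)$ inherited from $\ddot a_{ij}$ in \eqref{eq:ddot-aij} (indeed $\dot s_{ij}$ is the only place $u$ enters, and $\dot a_{jk}=b_{jk}s_{jk}$, $\dot r_i=\sum_k b_{ik}s_{ik}$ are all $u$-free). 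Consequently
\[
\mathcal{R}_i = \dddot z_i\big|_{u=0} + 3\lambda\,\ddot z_i + 3\lambda^2\,\dot z_i + \lambda^3\,(z_i-\bar z).
\]

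The key observation is that summing over $i$ turns each block into a time-derivative of the conserved quantity $\sum_i z_i$. Since $A(x)$ is weight-balanced and the control never enters the $z$-equation, the invariance Lemma gives $\sum_i z_i(t)\equiv N\bar z$, so all its time-derivatives vanish. Because \eqref{eq:ddotz} and \eqref{eq:dddotz} are precisely the algebraic forms of $\tfrac{d^2}{dt^2}z_i$ and $\tfrac{d^3}{dt^3}z_i$ along the dynamics, reading these off through the chain rule yields the identities $\sum_i \dot z_i = 0_d$, $\sum_i \ddot z_i = 0_d$, and $\sum_i \dddot z_i\big|_{u=0}=0_d$, the last being $\tfrac{d^3}{dt^3}\sum_i z_i$ for the system with $u\equiv 0$. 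Together with the trivial $\sum_i (z_i-\bar z)=0_d$, these four facts give
\[
\sum_{i=1}^N \mathcal{R}_i = \sum_{i=1}^N \dddot z_i\big|_{u=0} + 3\lambda\sum_{i=1}^N \ddot z_i + 3\lambda^2\sum_{i=1}^N \dot z_i + \lambda^3\sum_{i=1}^N(z_i-\bar z) = 0_d,
\]
which is the claim.

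If instead a self-contained verification mirroring Theorem~\ref{thm:compat-via-x} were desired, the plan would be to sum \eqref{eq:R_itre} directly: the terms pairing $b_{ij}s_{ij}$ (or $\ddot a^{(0)}_{ij}$) with an antisymmetric $z$-difference vanish under the $i\leftrightarrow j$ swap using $b_{ij}=b_{ji}$ and $s_{ij}=s_{ji}$; the pure $A$-products collapse by weight balance just as $S_2$ and $S_4$ did; and the $r_i$-weighted terms cancel by the column-sum argument used for $S_3$, now extended to the $\dot r_i$-weighted triple products. The main obstacle on this route is purely bookkeeping: one must check that the mixed terms $\sum_{j,k}a_{ij}(\dot a_{jk}-\dot a_{ik})z_k$ and especially the cubic term $\sum_{j,k,\ell}a_{ij}a_{k\ell}(a_{jk}-a_{ik})(z_k-z_\ell)$ telescope under weight balance, which is delicate because of the extra free index $\ell$. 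The conceptual route above bypasses this entirely, so I would adopt it and, if at all, relegate the explicit computation to a remark.
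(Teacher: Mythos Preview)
Your argument is correct and takes a genuinely different route from the paper. The paper proceeds by direct inspection of \eqref{eq:R_itre}: it groups the sixteen summands into seven structural types $S_1,\dots,S_7$ and shows each vanishes, via the $i\leftrightarrow j$ swap for symmetric-weight/antisymmetric-difference terms, via weight balance for the pure $A$-products, and via the column-sum identity $\sum_i a_{ij}=r_j$ for the $r_i$- and $\dot r_i$-weighted pairs. Your approach instead recognises $\mathcal{R}_i$ as the $u$-independent part of the error polynomial $P_i=\dddot z_i+3\lambda\ddot z_i+3\lambda^2\dot z_i+\lambda^3(z_i-\bar z)$, and since $\sum_i z_i$ is conserved along the uncontrolled dynamics (the $z$-equation is untouched by $u$), every time derivative of $\sum_i z_i$ vanishes identically as a function of state; setting $u=0$ is then immaterial because $\dddot z_i$ is affine in $u$ with no $\dot u$ contribution. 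What your route buys is brevity and generality: the same two-line argument proves the compatibility condition for indirect Z-control at any level $r<k$ and any order $k$, whereas the paper's computation must be redone case by case. What the paper's route buys is an independent consistency check of the explicit formula \eqref{eq:R_itre} itself, which is useful given the length of its derivation.
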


\begin{proof}
We prove that the sum over $i$ of  $\mathcal{R}_i$ is a finite sum of terms of a given type. We first we prove that  vanish those of the following structural types:
\begin{align*}
&S_1:=\sum_{i,j=1}^N \ c_{ij}(x,v,z,\lambda)\,(\xi_j-\xi_i),\qquad  c_{ij}=c_{ij}(x,v,z,\lambda), \, c_{ij}=c_{ji},\ \ \xi\in\{z,v\},\\
&S_2:=\sum_{i,j,k=1}^N  a_{ij}\,\big(a_{jk}(x)-a_{ik}\big)\,q_k,\qquad q\in\{z,v\}\ \text{(and time-derivatives, if present).}
\end{align*}
By symmetry $c_{ij}=c_{ji}$ and antisymmetry of differences, reindexing $(i,j)$ with $(j,i)$ shows that each unordered pair $\{i,j\}$ contributes a term and its negative. Hence, $S_1=0$. 

\noindent The $S_2$ term
 \[
S_2 = \sum_{i=1}^N \sum_{j,k=1}^N a_{ij}(a_{jk} - a_{ik}) \,q_k =\sum_{k=1}^N\Big(\sum_{i,j=1}^N a_{ij}a_{jk}-\sum_{i,j=1}^N a_{ij}a_{ik}\Big)q_k=0_d \qquad q \in \{z,v\} 
\]
and the  $S_3$ term
\begin{align*}
S_3 :&= \sum_{i=1}^N \sum_{j,k,\ell =1}^N a_{ij} \, a_{k\ell}\left(a_{jk}-a_{ik}\right) \left(z_k-z_{\ell}\right)\\
&= \sum_{k,\ell =1}^N a_{k\ell} \Big(\sum_{i,j}a_{ij} \,\left(a_{jk}-a_{ik}\right) \Big) \left(z_k-z_{\ell}\right) = 0_d 
\end{align*}
both vanish because of the weight-balanced property of $A(x)$. The row/column balance terms $S_4, S_5, S_6, S_7$
\begin{align*}
    &S_4 :=\sum_{i=1}^N \Bigg(r_i \sum_{j=1}^N b_{ij}\,s_{ij}\,z_i - \sum_{j=1}^N b_{ij}\,s_{ij}\,r_j\,z_j\Bigg), \quad
    S_5 :=\sum_{i=1}^N \Bigg(r_i\, \dot{r}_i\,z_i - \sum_{j=1}^N a_{ij}\,\dot{r}_j\,z_j\Bigg) \\
    &S_6:= \sum_{i=1}^N \left( \sum_{j=1}^N a_{ij} r_i^2 (z_j-z_i)-\sum_{j,k=1}^N a_{ij} a_{jk} r_j(z_k-z_j)\right), \\
    & 
   S_7:=\sum_{i=1}^N \Bigg(r_i^2 z_i - \sum_{j=1}^N a_{ij}\,r_j\,z_j\Bigg)
\end{align*}
cancel in pairs. Indeed, 
\begin{align*}
    S_4 &= \sum_{i=1}^N \Bigg(r_i \sum_{j=1}^N b_{ij}\,s_{ij} z_i - \sum_{j=1}^N b_{ij}\,s_{ij}\,r_j\,z_j\Bigg) \,  = \sum_{i=1}^N \Bigg(r_i \sum_{j=1}^N \dot{a}_{ij}\, z_i - \sum_{j=1}^N \dot{a}_{ij}\,r_j\,z_j\Bigg)\\
    &= \sum_{i=1}^N r_i \, \dot{r}_i\,z_i - \sum_{j=1}^N r_j\,z_j \sum_{i=1}^N \dot{a}_{ij}\, = \sum_{i=1}^N r_i \, \dot{r}_i\,z_i - \sum_{j=1}^N r_j\,z_j \sum_{i=1}^N \dot{a}_{ji} \\
    &= \sum_{i=1}^N r_i \, \dot{r}_i\,z_i - \sum_{j=1}^N r_j\,z_j \, \dot{r}_j = 0_d.
\end{align*}
The remaining cases $S_5=S_6=S_7=0$ follow by the same row/column balance arguments. Finally, $\sum_{i=1}^N (z_i-\bar z)=N\bar{z}-N\bar{z}=0_d$. Adding all the terms  yields $\sum_{i=1}^N \mathcal{R}_i=0_d$.
\end{proof}
\paragraph{Minimum-norm solution}\label{sec:5.3}In general, the  structure of $\mathbb{L}_B$, with rank-one off-diagonal
$d\times d$ blocks and vanishing block-row sums, together with the underlying
symmetries, reduces its effective rank, which we numerically observe to be
\begin{equation}\label{rank}
    \operatorname{rank}(\mathbb{L}_B)=Nd-\tfrac{d(d+1)}{2}.
\end{equation}
(see Table \ref{tab:Nd_rank_lambda}). This implies that $\ker(\mathbb{L}_B^\top)$ is strictly larger than
$\mathrm{span}\{1_N \otimes w\}$, so the compa\-tibility condition in
\eqref{eq:zero-sum-necessary} is necessary but not sufficient for solvability
of $\mathbb{L}_B U=-\mathcal{R}$. For this reason, we adopt a least-squares
formulation and select the control as the unique minimizer of
\begin{equation}\label{eq:baseline-stacked}
\min_{U\in\mathbb{R}^{Nd}}\;\bigl\|\mathbb{L}_B U+\mathcal{R}\bigr\|_2^2.
\end{equation}
Observe that the rank deficiency is
\(
Nd-\operatorname{rank}(\mathbb{L}_B)
= \frac{d(d+1)}{2},
\)
which depends only on $d$ and not on $N$. Thus, increasing the dimension $d$
decrease the rank; consequently,  in order to have a non-trivial minimum-norm solution, the rank of
$\mathbb{L}_B$ must be at least $1$. From (\ref{rank}) we obtain the constraint
$
Nd-\frac{d(d+1)}{2} \;\geq\; 1,
\label{eq:rank_geq_1}
$
and therefore, 
\[
N \ge \left\lceil \frac{d+1}{2} + \frac{1}{d} \right\rceil.
\]
In other words, the dimensions $N$ and $d$ cannot be chosen arbitrarily: for a
given $d$, the size $N$ must be sufficiently large so that the rank of $L_B$ is at least $1$ and the solution is
non-trivial.

\section{Higher-dimensional regimes and tuning of the parameter \texorpdfstring{$\lambda$}{lambda}}
\label{sec:highdim}
Our numerical experiments so far have deliberately targeted low to moderate sizes (small $d$ and small $N$) in order to isolate the structural effects of Z-control from pure scaling issues. We now investigate the performance of Z-control in higher-dimensional settings, considering both the direct and indirect control scenarios.
\paragraph{Direct Z-control}
Because direct Z-control acts directly on the top-order state, the algebraic complexity grows only mildly with $Nd$. Table~\ref{tab:consensus_times} reports the time needed to reach consensus under direct control for first, second, and third-order models with $N = 150$ and $d \in \{50, 100, 150\}$. As the table shows, increasing the state dimension has no appreciable impact on the consensus time. 
\begin{table}[htbp]
\small
\centering
\begin{tabular}{ccccc}
\toprule
& & \multicolumn{3}{c}{\textbf{Order}} \\
\cmidrule{3-5}
& & $\mathbf{1st}$ & $\mathbf{2nd}$ & $\mathbf{3rd}$ \\
\midrule
\multirow{3}{*}{\textbf{d}} 
& $\mathbf{50}$ & $81.059$ & $93.440$ & $94.799$ \\
& $\mathbf{100}$  & $85.375$ & $100.735$ & $101.534$ \\
& $\mathbf{150}$ & $88.752$ & $105.391$ & $106.258$ \\
\bottomrule\\
\end{tabular}
\caption{CPU time (s) to consensus for first, second, and third-order models with $N = 150$ agents and increasing state dimension $d$, under direct Z-control with $\lambda = 1$.}
\label{tab:consensus_times}
\end{table}
To further illustrate the robustness of direct Z-control in high-dimensional settings, Figure~\ref{fig:directHD} displays the decay of the consensus for $N=150$ agents of dimension $d=150$ across the three models and varying values of the control parameter $\lambda\in\{0.1,1,5,10\}$. The results confirm that direct control ensures rapid convergence independently on the order, and that increasing $\lambda$ accelerates consensus without compromi\-sing numerical stability. This behavior complements the findings in Table~\ref{tab:consensus_times}, reinforcing the scalability and effectiveness of direct Z-control even in large-dimensional systems.

\begin{figure}[htbp]
    \centering
\includegraphics[width=0.45\linewidth]{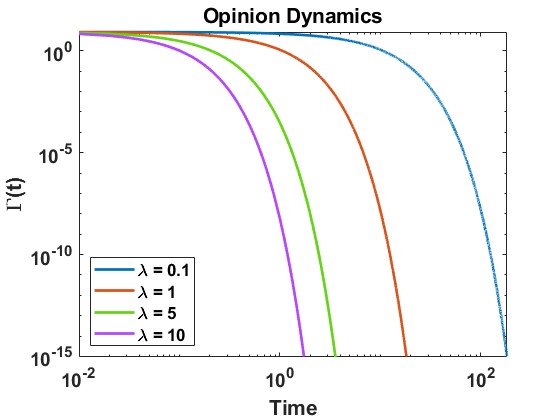}
\includegraphics[width=0.45\linewidth]{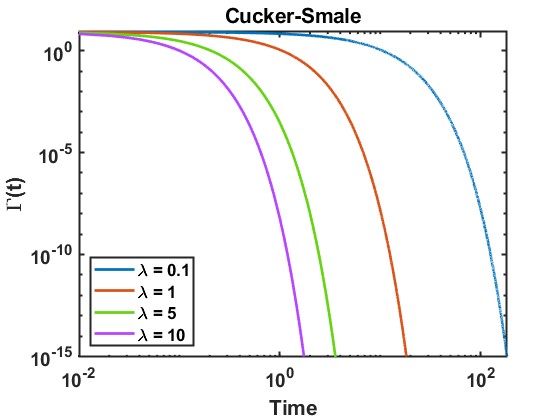}
\includegraphics[width=0.45\linewidth]{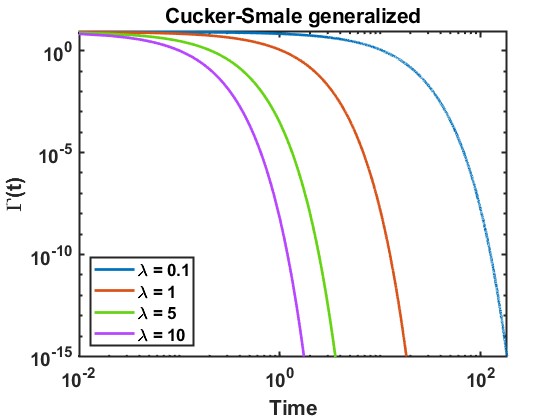}
    \caption{Time evolution of the consensus under direct Z-control for first, second, and third-order models with $N = 150$ and $d = 150$. Each panel shows the decay of $\Gamma(t)$ for different values of the control parameter $\lambda \in\{0.1,1,5,10\}$. Larger values of $\lambda$ lead to faster convergence, with all models exhibiting robust exponential decay.}
    \label{fig:directHD}
\end{figure}
\paragraph{Indirect Z-control}
In principle, in the indirect setting the design parameter $\lambda$ can still be chosen arbitrarily, but its value plays an important role in ensuring the numerical solvability of the system~\eqref{eq:baseline-stacked}. Our experiments show that when $\lambda$ is chosen too large, the largest singular value of the matrix $\mathbb{L}_B$ associated with the linear system collapses to zero. We therefore decrease $\lambda$ to slow down the convergence rate, so that the system approaches consensus more gradually and the numerical algorithm can better cope with ill-conditioning. For moderate and low dimensions, such as those used in our tests, the choice $\lambda = 1$ is sufficiently small to prevent numerical instabilities. For higher dimensions, however, we observed that much smaller values of $\lambda$ are required. In Table~\ref{tab:Nd_rank_lambda}, for fixed $N=150$, we report the maximum value $\lambda_{\max}$ that still guarantees convergence to consensus for the indirect Z-controlled Cucker-Smale model. However, as can be seen in Figure \ref{fig:ind_consensusHD}, reduced values of $\lambda$ imply a slower convergence to consensus. This together with the fact that increasing $d$ also increases the overall dimension of $\mathbb{L}_B$, this makes the algorithm impractical for high-dimensional models.
 \begin{table}[h!]
\centering
\small
\begin{tabular}{c c c c}
\hline 
$d$ & Dimension$(\mathbb{L}_B)$ & rank$(\mathbb{L}_B)$ & $\lambda_{max}$ \\
\hline
3  & 450  & 444  & 1  \\
4  & 600  & 590  & 0.7\\
5  & 750  & 735  & 0.6\\
10 & 1500 & 1445 & 0.3\\
20 & 3000 & 2790 & 0.1\\
30 & 4500 & 4035 & 0.05\\
\hline \\
\end{tabular}
\caption{Dimension and rank of $L_B$, and the corresponding threshold value $\lambda_{\max}$ for the indirect Z-controlled Cucker-Smale model. Results are reported for increasing state dimension $d$, with a fixed number of agents $N=150$.}
\label{tab:Nd_rank_lambda}
\end{table}
 A possible way to avoid reducing the rate parameter $\lambda$ is to rely on minimum-norm solutions obtained via Tikhonov regularization \cite{golub1999tikhonov}. However, this may come at the cost of deteriora\-ting convergence to consensus: since the solution is biased toward a zero-norm control, the trajectories can stagnate in a neighbourhood of consensus without actually reaching it, while the controls themselves decay to zero.

\begin{figure}
    \centering
\includegraphics[width=0.45\linewidth]{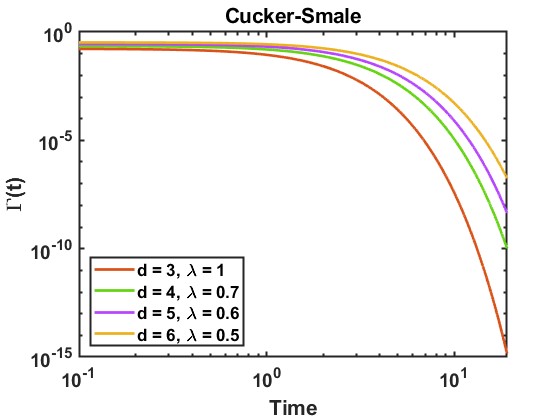}
\includegraphics[width=0.45\linewidth]{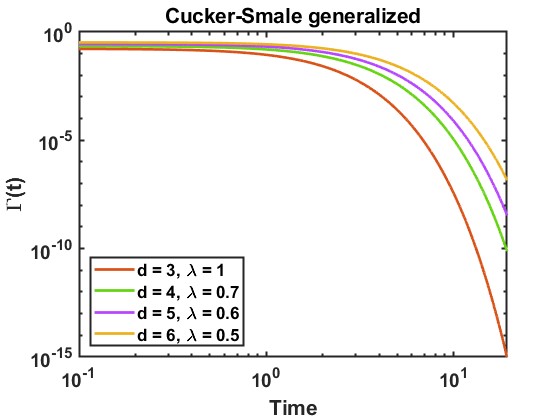}
    \caption{
Time evolution of the consensus $\Gamma(t)$  for the indirect Z-controlled Cucker-Smale model (left) and its generalized version (right) with $N=150$. Curves correspond to increasing dimensions $d$ and to the associated threshold values $\lambda_{\max}$ ensuring convergence to consensus. As $d$ grows, smaller $\lambda$ are required, resulting in a slower decay of $\Gamma(t)$ and hence a slower approach to consensus.}

    \label{fig:ind_consensusHD}
\end{figure}

MATLAB codes implementing the direct and indirect Z-control are available in the GitHub repository
\url{https://github.com/CnrIacBaGit/Z_control_multi_agents}.


 \section{Conclusions}\label{sec:conclusion}
We introduced a hierarchical Z-control methodology to stabilize consensus in multi-agent alignment models of arbitrary order. To the best of our knowledge, Z-control has not previously been explored for the class of alignment dynamics considered here; this work takes a first step in that direction.

In the \emph{direct} setting, Z-control is straightforward to implement and scales well with dimension, outperforming standard feedback laws for these dynamics \cite{monti2025hierarchicalclusteringdimensionalreduction}. Our numerical tests demonstrate that increasing the dimensions does not materially affect the consensus time.

Unlike classical approaches that assume actuation on the highest-order state, Z-control also supports \emph{indirect} actuation: consensus at the top-order variable is achieved by interve\-ning on lower-order dynamics (e.g., velocity or position). This capability is largely absent from prior formulations and offers a complementary, and arguably more practical, route to controlling collective behavior.
This indirect setting is common in environmental systems, where actuators affect only lower-order variables (e.g., flow fields or external cues) rather than the highest-order dynamics. In fish guidance near hydraulic structures (second-order models with velocity as the top-order state), operators steer velocity consensus by shaping ambient flow via gates, spillways, or jets that create preferred streamlines and reduce recirculation~\cite{nakai2025guidance, gautrais2009analyzing}. Direct top-order control would require agent-specific forces/torques and full-state feedback, which are unavailable. Similarly, for light-activated microswimmers, spatiotemporal light landscapes induce coherent motion changes and top-order consensus without direct actuation~\cite{lozano2016phototaxis}. Hence, indirect control is natural and practical, while direct highest-order actuation is often infeasible.

For indirect controllers, we derived a unified least-squares formulation and proved compatibility of the associated linear systems by showing that the right-hand sides in all models satisfy a zero-sum condition. However, this condition is necessary but not sufficient, as it does not by itself guarantee solvability of the system. We therefore adopt a least-squares formulation for determining the control solution. In this case, the dominant computational burden is the treatment of the $Nd \times Nd$ block matrix, which is intrinsically low rank and whose blocks collapse towards zero as consensus is approached. For low and moderate dimensions, such as those used in our tests, an appropriate choice of the parameter $\lambda$ is sufficient to avoid the numerical instabilities linked to the deterioration of the matrix rank. However, for higher dimensions we observed that very small values of $\lambda$ are necessary to reach consensus. This may result in computational times that are not acceptable, especially for real-world applications. The key algorithmic requirement is therefore a solver that remains accurate and efficient for matrices whose largest singular value decays to zero as consensus is reached, while fully exploiting the matrix's block structure and effective rank.

A promising way to mitigate the numerical challenges associated with the problem's structure and with high dimensionality is to couple Z-control with \emph{model order reduction} (MOR): project the network state and the control onto low-dimensional subspaces; solve the reduced least-squares problem; and lift the control back to the full space under suitable consistency constraints. Such  MOR ideas have proved effective in optimal control of agent systems (e.g., \cite{monti2025hierarchicalclusteringdimensionalreduction}) and are natural candidates for our least-squares solver. Moreover, the approach naturally extends to more complex agent behaviors, such as intermittency \cite{Monti_onoff,Diele_onoff} and spatial pattern formation \cite{alla2023adaptive,ALLApdmd,BOZZINI2021}, confirming its broader relevance. We plan to investigate MOR-enhanced Z-control in future work, aiming to preserve convergence to consensus while reducing the per-step computational cost.

\section*{Acknowledgements}

\noindent A.M. and F.D. research activity is funded by the National Recovery and Resilience Plan (NRRP), Mission 4 Component 2 Investment 1.4 - Call for tender No.
3138 of 16 December 2021, rectified by Decree n.3175 of 18 December 2021 of Italian Ministry of University and Research funded by the European Union - NextGenerationEU; Award Number: Project code CN 00000033, Concession Decree No. 1034 of 17 June 2022 adopted by the Italian Ministry of University and Research, CUP B83C22002930006, Project title \lq \lq National Biodiversity Future Centre''. A.M. acknowledges support from a scholarship (\emph{borsa per l’estero}) granted by Istituto Nazionale di Alta Matematica (INdAM) to carry out a research stay at Imperial College London. A.M. and F.D. are members of the INdAM research group GNCS. F.D. and A.M. would like to thank Mr. Cosimo Grippa for his valuable technical support.

\bibliographystyle{plain} 
\bibliography{biblio.bib}

\end{document}